\documentclass{siamart0516}
\usepackage{amsfonts}
\usepackage{accents}
\usepackage[margin=1in, letterpaper]{geometry}
\usepackage{graphicx}
\usepackage{bmpsize}
\usepackage{subcaption}
\usepackage{float}
\usepackage{epsfig}
\usepackage{multicol}
\usepackage{cite}
\usepackage{adjustbox,lipsum}

\newcommand{\vertiii}[1]{{\left\vert\kern-0.25ex\left\vert\kern-0.25ex\left\vert #1\right\vert\kern-0.25ex\right\vert\kern-0.25ex\right\vert}}

\begin{document}

\title{Ensemble Timestepping Algorithms for Natural Convection}
\author{J. A. Fiordilino\thanks{University of Pittsburgh, Department of Mathematics, Pittsburgh, PA 15260} \and S. Khankan\footnotemark[1]}
\maketitle

\begin{abstract}
This paper presents two algorithms for calculating an ensemble of solutions to laminar natural convection problems.  The ensemble average is the most likely temperature distribution and its variance gives an estimate of prediction reliability.  Solutions are calculated by solving two coupled linear systems, each involving a shared coefficient matrix, for multiple right-hand sides at each timestep.  Storage requirements and computational costs to solve the system are thereby reduced.  Stability and convergence of the method are proven under a timestep condition involving fluctuations.  A series of numerical tests, including predictability horizons, are provided which confirm the theoretical analyses and illustrate uses of ensemble simulations.
\end{abstract}

\section{Introduction}
Ensemble calculations are essential in predictions of the most likely outcome of systems with uncertain data, e.g., weather forecasting \cite{Kalnay}, ocean modeling \cite{Lermusiaux}, turbulence \cite{Nan2}, etc.  Ensemble simulations classically involve J sequential, fine mesh runs or J parallel, coarse mesh runs of a given code.  This leads to a competition between ensemble size and mesh density.  We develop linearly implicit timestepping methods with shared coefficient matrices to address this issue.  For such methods, it is more efficient in both storage and solution time to solve J linear systems with a shared coefficient matrix than with J different matrices.  

Prediction of thermal profiles is essential in many applications \cite{Bairi,Gebhart,Ostrach,Marshall}.  Herein, we extend \cite{Nan} from isothermal flows to temperature dependent natural convection.  We consider two natural convection problems enclosed in mediums with: \textbf{non-zero wall thickness} \cite{Boland} and \textbf{zero wall thickness}; Figure 1 illustrates a typical setup.  The latter problem is often utilized as a thin wall approximation.

Consider the \textbf{Thick wall problem}.  Let $\Omega_{f} \subset \Omega $ be polyhedral domains in $\mathbb{R}^d (d=2,3)$ with boundaries $\partial \Omega_{f}$ and $\partial \Omega$, respectively, such that dist($\partial \Omega_{f}$,$\partial \Omega$) $> 0$.  The boundary $\partial \Omega$ is partitioned such that $\partial \Omega = \Gamma_{1}  \cup \Gamma_{2}$ with $\Gamma_{1} \cap \Gamma_{2} =\emptyset$ and $|\Gamma_{1}| > 0$.  Given $u(x,0;\omega_{j}) = u^{0}(x;\omega_{j})$ and $T(x,0;\omega_{j}) = T^{0}(x;\omega_{j})$ for $j = 1,2, ... , J$, let $u(x,t;\omega_{j}):\Omega \times (0,t^{\ast}] \rightarrow \mathbb{R}^{d}$, $p(x,t;\omega_{j}):\Omega \times (0,t^{\ast}] \rightarrow \mathbb{R}$, and $T(x,t;\omega_{j}):\Omega \times (0,t^{\ast}] \rightarrow \mathbb{R}$ satisfy
\begin{align}
u_{t} + u \cdot \nabla u -Pr \Delta u + \nabla p &= PrRa\gamma T + f \; \; in \; \Omega_{f}, \label{s1} \\
\nabla \cdot u &= 0 \; \; in \; \Omega_{f},  \\
T_{t} + u \cdot \nabla T - \nabla \cdot (\kappa \nabla T) &= g \; \; in \; \Omega, \label{s1T}  \\
u = 0 \; \; on \; \partial \Omega_{f}, \; \;
u = 0 \; \;in \; \Omega - \Omega_{f},  \; \;
T &= 0 \; \; on \; \Gamma_{1} \; \; and \; \;
n \cdot \nabla T = 0 \; \; on \; \Gamma_{2} \label{s1f}.
\end{align}
\noindent Here $n$ denotes the usual outward normal, $\gamma$ denotes the unit vector in the direction of gravity, $Pr$ is the Prandtl number, $Ra$ is the Rayleigh number, and $\kappa = \kappa_{f}$ in $\Omega_{f}$ and $\kappa = \kappa_{s}$ in $\Omega - \Omega_{f}$ is the thermal conductivity of the fluid or solid medium.  Further, $f$ and $g$ are the body force and heat source, respectively. 

Let $<u>^{n} := \frac{1}{J} \sum_{j=1}^{J} u^{n}$ and ${u'}^{n} = u^{n} - <u>^{n}$.  To present the idea, suppress the spatial discretization for the moment.  We apply an implicit-explicit time-discretization to the system (\ref{s1}) - (\ref{s1f}), while keeping the coefficient matrix independent of the ensemble members.  This leads to the following timestepping method:
\begin{align} 
\frac{u^{n+1} - u^{n}}{\Delta t} + <u>^{n} \cdot \nabla u^{n+1} + {u'}^{n} \cdot \nabla u^{n} - Pr \triangle u^{n+1} + \nabla p^{n+1} &= PrRa\gamma T^{n+1} + f^{n+1}, \label{d1a} \\ 
\nabla \cdot u^{n+1} &= 0, \\
\frac{T^{n+1} - T^{n}}{\Delta t} + <u>^{n} \cdot \nabla T^{n+1} + {u'}^{n} \cdot \nabla T^{n} - \kappa \Delta T^{n+1} &= g^{n+1}, \label{d2a}
\end{align}
Consider the \textbf{Thin wall problem}.  The main difference is a ``$u_{1}$" term on the r.h.s of the temperature equation (\ref{s2T}) absent in (\ref{s1T}).  This apparently small difference in the model produces a significant difference in the stability of the approximate solution.  In particular, a discrete Gronwall inequality is used which allows for the loss of long-time stability; see Section 4 below.  Consider:
\begin{align}
u_{t} + u \cdot \nabla u -Pr \Delta u + \nabla p &= PrRa\gamma T + f \; \; in \; \Omega, \label{s2} \\
\nabla \cdot u &= 0 \; \; in \; \Omega,  \\
T_{t} + u \cdot \nabla T - \nabla \cdot (\kappa \nabla T) &= u_{1} + g \; \; in \; \Omega, \label{s2T} \\
u = 0 \; \; on \; \partial \Omega,  \; \; \;
T = 0 \; \; on \; \Gamma_{1}, \; \; \;
n \cdot \nabla T &= 0 \; \; on \; \Gamma_{2}, \label{s2f}
\end{align}
\noindent where $u_{1}$ is the first component of the velocity.  If we again momentarily disregard the spatial discretization, our timestepping method can be written as:
\begin{align} 
\frac{u^{n+1} - u^{n}}{\Delta t} + <u>^{n} \cdot \nabla u^{n+1} + {u'}^{n} \cdot \nabla u^{n} - Pr \triangle u^{n+1} + \nabla p^{n+1} &= PrRa\gamma T^{n} + f^{n+1}, \label{d1}\\
\nabla \cdot u^{n+1} &= 0, \\ 
\frac{T^{n+1} - T^{n}}{\Delta t} + <u>^{n} \cdot \nabla T^{n+1} + {u'}^{n} \cdot \nabla T^{n} - \kappa \Delta T^{n+1} &= u^{n}_{1} + g^{n+1}\label{d2},
\end{align}
By lagging both $u'$ and the coupling terms in the method, the fluid and thermal problems uncouple and each sub-problem has a shared coefficient matrix for all ensemble members.

\noindent \textbf{Remark:}  The formulation (\ref{d1a}) - (\ref{d2a}) arises, e.g., in the study of natural convection within a unit square or cubic enclosure with a pair of differentially heated vertical walls.  In particular, the temperature distribution is decomposed into $\theta (x,t) = T(x,t) + \phi(x)$, where $\phi(x) = 1 - x_{1}$ is the linear conduction profile and $T(x,t)$ satisfies homogeneous boundary conditions on the corresponding pair of vertical walls.

In Section 2, we collect necessary mathematical tools.  In Section 3, we present algorithms based on (\ref{d1a}) - (\ref{d2a}) and (\ref{d1}) - (\ref{d2}).  Stability and error analyses follow in Section 4.  We end with numerical experiments and conclusions in Sections 5 and 6.  In particular, two stable, convergent ensemble algorithms are presented.  These algorithms can be used to efficiently compute an ensemble of solutions to (\ref{s1}) - (\ref{s1f}) and (\ref{s2}) - (\ref{s2f}) and estimate predictability horizons.  The ensemble average is shown to produce a better estimate of the energy in the system, for a test problem, than any member of the ensemble.
\begin{figure}
	\centering
	\includegraphics[width=5.5in,height=\textheight, keepaspectratio]{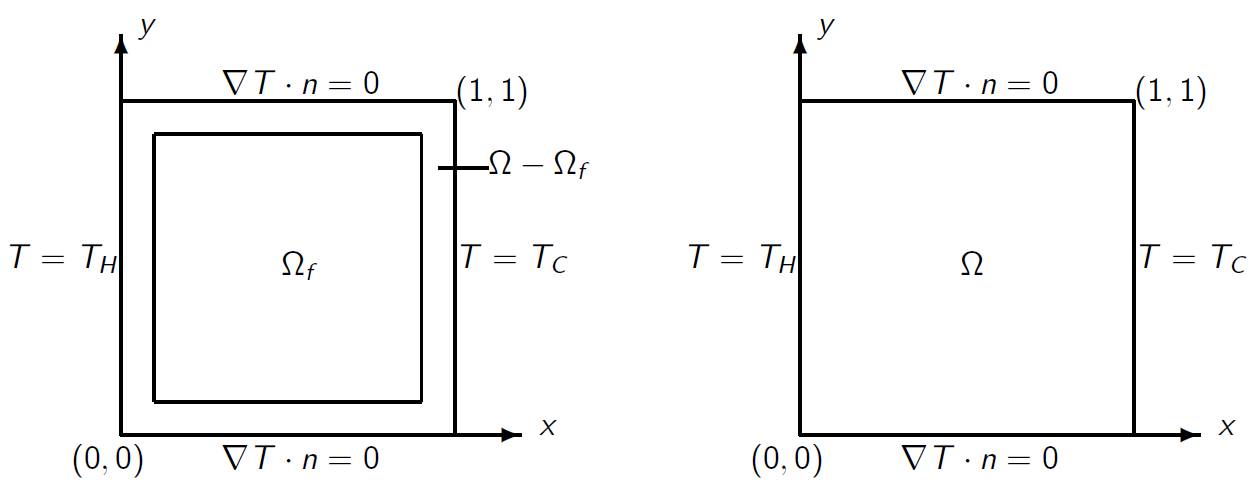}
	\caption{Domain and boundary conditions for (a) thick walled (b) thin walled double pane window problem benchmark.}
\end{figure}
\section{Mathematical Preliminaries}
The $L^{2} (\Omega)$ inner product is $(\cdot , \cdot)$ and the induced norm is $\| \cdot \|$.  Define the Hilbert spaces,
\begin{align*}
X &:= H^{1}_{0}(\Omega)^{d} = \{ v \in H^{1}(\Omega)^d : v = 0 \; on \; \partial \Omega \}, \;
Q := L^{2}_{0}(\Omega) = \{ q \in L^{2}(\Omega) : \int_{\Omega} q dx = 0 \}, \\
W &:= \{ S \in H^{1}(\Omega) : S = 0 \; on \; \Gamma_{1} \}, \;
V := \{ v \in X : (q,\nabla \cdot v) = 0 \; \forall q \in Q \}.
\end{align*}
The explicitly skew-symmetric trilinear forms are denoted:
\begin{align*}
b(u,v,w) &= \frac{1}{2} (u \cdot \nabla v, w) - \frac{1}{2} (u \cdot \nabla w, v) \; \; \; \forall u,v,w \in X, \\
b^{\ast}(u,T,S) &= \frac{1}{2} (u \cdot \nabla T, S) - \frac{1}{2} (u \cdot \nabla S, T) \; \; \; \forall u \in X, \; \forall T,S \in W.
\end{align*}
\noindent They enjoy the following continuity results and properties.
\begin{lemma} \label{l1}
There are constants $C_{1}, C_{2}, C_{3}, C_{4}, C_{5},$ and $C_{6}$ such that for all u,v,w $\in$ X and T,S $\in W$, $b(u,v,w)$ and $b^{\ast}(u,T,S)$ satisfy
\begin{align*}
b(u,v,w) &= \int_{\Omega} u \cdot \nabla v \cdot w dx + \frac{1}{2} \int_{\Omega} (\nabla \cdot u)v \cdot w dx, \\
b^{\ast}(u,T,S) &= \int_{\Omega} u \cdot \nabla T S dx + \frac{1}{2} \int_{\Omega} (\nabla \cdot u)T S dx, \\
b(u,v,w) &\leq C_{1} \| \nabla u \| \| \nabla v \| \| \nabla w \|, \\
b(u,v,w) &\leq C_{2} \sqrt{\| u \| \| \nabla u \|} \| \nabla v \| \| \nabla w \|, \\
b^{\ast}(u,T,S) &\leq C_{3} \| \nabla u \| \| \nabla T \| \| \nabla S \|,\\
b^{\ast}(u,T,S) &\leq C_{4} \sqrt{\| u \| \| \nabla u \|} \| \nabla T \| \| \nabla S \|,\\
b(u,v,w) &\leq C_{5} \| \nabla u \| \| \nabla v \| \sqrt{\| w \| \| \nabla w \|}, \\
b^{\ast}(u,T,S) &\leq C_{6} \| \nabla u \| \| \nabla T \| \sqrt{\| S \| \| \nabla S \|}.
\end{align*}
\begin{proof}
The proof of the first two identities is a calculation.  The next four results follow from applications of H\"{o}lder and Sobolev embedding inequalities; see Lemma 2.2 on p. 2044 of \cite{Layton}.  We will prove the last two results for $d = 3 $; for $d = 2$ they are improvable.  For all u,v,w $\in$ X,
\begin{align*}
|(u \cdot \nabla v, w)| &\leq C \| u \|_{L^{6}} \| \nabla v \| \| w \|_{L^{3}} 
\\ &\leq C \| \nabla u \| \| \nabla v \| \sqrt{\| w \| \| \nabla w \|},
\end{align*}
where H\"{o}lder, Ladyzhenskaya and Gagliardo-Nirenberg inequalities were used, respectively.  Using the above result and inequalities and the first identity in Lemma \ref{l1},
\begin{align*}
|b(u,v,w)| &= |(u \cdot \nabla v, w) + \frac{1}{2} \int_{\Omega} (\nabla \cdot u) v \cdot w dx | 
\\ &\leq |(u \cdot \nabla v, w)| + |\frac{1}{2} \int_{\Omega} (\nabla \cdot u) v \cdot w dx | 
\\ &\leq C \| \nabla u \| \| \nabla v \| \sqrt{\| w \| \| \nabla w \|} + C \| \nabla \cdot u \| \| v \|_{L^{6}} \| w \|_{L^{3}}
\\ &\leq C \| \nabla u \| \| \nabla v \| \sqrt{\| w \| \| \nabla w \|} + C \| \nabla u \| \| \nabla v \| \sqrt{\| w \| \| \nabla w \|} 
\\ &\leq C \| \nabla u \| \| \nabla v \| \sqrt{\| w \| \| \nabla w \|}.
\end{align*}
In similar fashion, there is a $C = C(\Omega)$ such that
\begin{align*}
|b^{\ast}(u,T,S)| &\leq |(u \cdot \nabla T, S)| + |\frac{1}{2} \int_{\Omega} (\nabla \cdot u) T S dx | 
\\ &\leq C \| \nabla u \| \| \nabla T \| \sqrt{\| S \| \| \nabla S \|} + C \| \nabla \cdot u \| \| T \|_{L^{6}} \| S \|_{L^{3}}
\\ &\leq C \| \nabla u \| \| \nabla T \| \sqrt{\| S \| \| \nabla S \|} + C \| \nabla u \| \| \nabla T \| \sqrt{\| S \| \| \nabla S \|}
\\ &\leq C \| \nabla u \| \| \nabla T \| \sqrt{\| S \| \| \nabla S \|}.
\end{align*}
\end{proof}
\end{lemma}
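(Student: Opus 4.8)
The plan is to treat the three groups of claims separately, since they rely on different tools. For the two algebraic identities, I would start from the definitions of $b$ and $b^\ast$ and integrate by parts on the second term of each. Writing $b(u,v,w) = \frac{1}{2}(u\cdot\nabla v, w) - \frac{1}{2}(u\cdot\nabla w, v)$ and integrating the factor $(u\cdot\nabla w, v)$ by parts component-wise moves the derivative off $w$; the boundary integral $\int_{\partial\Omega} (u\cdot n)(v\cdot w)\,dS$ vanishes because $u \in X = H_0^1(\Omega)^d$, leaving $-(u\cdot\nabla v, w) - (\nabla\cdot u, v\cdot w)$. Substituting back and collecting terms yields $b(u,v,w) = (u\cdot\nabla v, w) + \frac{1}{2}(\nabla\cdot u, v\cdot w)$, which is the first identity. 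The identity for $b^\ast$ follows identically, and I would note that only $u$ need vanish on the boundary for the boundary term to drop.

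For the four continuity estimates, the strategy is the standard combination of H\"{o}lder's inequality with Sobolev embeddings. Bounding $(u\cdot\nabla v, w)$ by a product of three norms with H\"{o}lder exponents summing to one, and then converting the $L^p$ norms of $u$ (or $w$) into gradient norms via $H^1 \hookrightarrow L^p$, produces the $\|\nabla u\|\|\nabla v\|\|\nabla w\|$ bound; replacing one full $H^1$ estimate by the interpolated factor $\sqrt{\|u\|\|\nabla u\|}$ gives the second form. These are entirely routine, so I would simply cite Lemma 2.2 of \cite{Layton}, taking care that the divergence correction term generated by the identities above is absorbed into the same bound.

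The genuinely new part is the last pair of estimates, in which the factor on $w$ (resp.\ $S$) is the interpolated quantity $\sqrt{\|w\|\|\nabla w\|}$. Here I would work in $d=3$ and apply H\"{o}lder with exponents $(6,2,3)$ to get $|(u\cdot\nabla v, w)| \leq C\|u\|_{L^6}\|\nabla v\|\|w\|_{L^3}$. The Sobolev embedding $H^1 \hookrightarrow L^6$ controls $\|u\|_{L^6} \leq C\|\nabla u\|$, while the Ladyzhenskaya/Gagliardo--Nirenberg interpolation inequality gives $\|w\|_{L^3} \leq C\|w\|^{1/2}\|\nabla w\|^{1/2} = C\sqrt{\|w\|\|\nabla w\|}$. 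This yields the claimed bound for the leading term. To pass from $(u\cdot\nabla v, w)$ to $b(u,v,w)$, I would invoke the first identity and bound the correction $\frac{1}{2}|(\nabla\cdot u, v\cdot w)|$ by $\frac{1}{2}\|\nabla\cdot u\|\|v\|_{L^6}\|w\|_{L^3}$, using $\|\nabla\cdot u\| \leq C\|\nabla u\|$ and $\|v\|_{L^6}\leq C\|\nabla v\|$; this is of the same form and is absorbed. The estimate for $b^\ast$ is identical with $(v,w)$ replaced by $(T,S)$.

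I expect the main obstacle to be bookkeeping rather than conceptual: selecting H\"{o}lder exponents that are simultaneously admissible for the Sobolev embeddings in $d=3$ and that leave exactly the interpolated factor $\sqrt{\|w\|\|\nabla w\|}$ on the correct argument, and then verifying that the divergence term produced by skew-symmetrization admits the same bound. The case $d=2$ is easier because the embeddings are more generous, consistent with the author's remark that the results are improvable there, so I would only need to check that the $d=3$ exponent choices degrade gracefully.
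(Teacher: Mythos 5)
Your proposal is correct and follows essentially the same route as the paper: the identities by integration by parts (the paper calls this ``a calculation''), the four standard continuity bounds by citing Lemma 2.2 of \cite{Layton}, and the last two estimates in $d=3$ via H\"{o}lder with exponents $(6,2,3)$, the embedding $H^{1}\hookrightarrow L^{6}$, the Gagliardo--Nirenberg interpolation $\|w\|_{L^{3}}\leq C\sqrt{\|w\|\|\nabla w\|}$, and absorption of the divergence correction term into the same bound. No gaps.
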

The weak formulation of system (\ref{s1}) - (\ref{s1f}) is: Find $u:[0,t^{\ast}] \rightarrow X$, $p:[0,t^{\ast}] \rightarrow Q$, $T:[0,t^{\ast}] \rightarrow W$ for a.e. $t \in (0,t^{\ast}]$ satisfying for $j = 1,...,J$:
\begin{align}
(u_{t},v) + b(u_,u,v) + Pr(\nabla u,\nabla v) - (p, \nabla \cdot v) &= PrRa(\gamma T,v) + (f,v) \; \; \forall v \in X, \\
(q, \nabla \cdot u) &= 0 \; \; \forall q \in Q, \\
(T_{t},S) + b^{\ast}(u,T,S) + \kappa (\nabla T,\nabla S) &= (g,S) \; \; \forall S \in W.
\end{align}
Similarly, the weak formulation of system (\ref{s2}) - (\ref{s2f}) is:
Find $u:[0,t^{\ast}] \rightarrow X$, $p:[0,t^{\ast}] \rightarrow Q$, $T:[0,t^{\ast}] \rightarrow W$ for a.e. $t \in (0,t^{\ast}]$ satisfying for $j = 1,...,J$:
\begin{align}
(u_{t},v) + b(u_,u,v) + Pr(\nabla u,\nabla v) - (p, \nabla \cdot v) &= PrRa(\gamma T,v) + (f,v) \; \; \forall v \in X, \\
(q, \nabla \cdot u) &= 0 \; \; \forall q \in Q, \\
(T_{t},S) + b^{\ast}(u,T,S) + \kappa (\nabla T,\nabla S) &= (u_{1},S) + (g,S) \; \; \forall S \in W.
\end{align}
\subsection{Finite Element Preliminaries}
Consider a regular, quasi-uniform mesh $\Omega_{h} = \{K\}$ of $\Omega$ with maximum triangle diameter length $h$.  Further, for the system (\ref{s1}) - (\ref{s1f}), suppose that $\partial \Omega_{f}$ and $\partial \Omega - \partial \Omega_{f}$ lie along the meshlines of the triangulation of $\Omega$.  Let $X_{h} \subset X$, $Q_{h} \subset Q$, and $W_{h} \subset W$ be conforming finite element spaces consisting of continuous piecewise polynomials of degrees \textit{j}, \textit{l}, and \textit{j}, respectively.  Moreover, assume they satisfy the following approximation properties $\forall 1 \leq j,l \leq k,m$:
\begin{align}
\inf_{v_{h} \in X_{h}} \Big\{ \| u - v_{h} \| + h\| \nabla (u - v_{h}) \| \Big\} &\leq Ch^{k+1} \lvert u \rvert_{k+1}, \label{a1}\\
\inf_{q_{h} \in Q_{h}}  \| p - q_{h} \| &\leq Ch^{m} \lvert p \rvert_{m}, \label{a2}\\
\inf_{S_{h} \in W_{h}}  \Big\{ \| T - S_{h} \| + h\| \nabla (T - S_{h}) \| \Big\} &\leq Ch^{k+1} \lvert T \rvert_{k+1}, \label{a3}
\end{align}
for all $u \in X \cap H^{k+1}(\Omega)^{d}$, $p \in Q \cap H^{m}(\Omega)$, and $T \in W \cap H^{k+1}(\Omega)$.  Furthermore, we consider those spaces for which the discrete inf-sup condition is satisfied,
\begin{equation} \label{infsup} 
\inf_{q_{h} \in Q_{h}} \sup_{v_{h} \in X_{h}} \frac{(q_{h}, \nabla \cdot v_{h})}{\| q_{h} \| \| \nabla v_{h} \|} \geq \beta > 0,
\end{equation}
\noindent where $\beta$ is independent of $h$.  The space of discretely divergence free functions is defined by 
\begin{align*}
V_{h} := \{v_{h} \in X_{h} : (q_{h}, \nabla \cdot v_{h}) = 0, \forall q_{h} \in Q_{h}\}.
\end{align*}
The space $V_{h}^{\ast}$, dual to $V_{h}$, is endowed with the following dual norm
\begin{align*}
	\| w \|_{V_{h}^{\ast}} := \sup_{v_{h} \in V_{h}}\frac{(w,v_{h})}{\|\nabla v_{h}\|}.
\end{align*}
The discrete inf-sup condition implies that we may approximate functions in $V$ well by functions in $V_{h}$,
\begin{lemma} \label{l5}
	Suppose the discrete inf-sup condition (\ref{infsup}) holds, then for any $v \in V$
	\begin{equation*}
	\inf_{v_{h} \in V_{h}} \| \nabla (v - v_{h}) \| \leq C(\beta)\inf_{v_{h} \in X_{h}} \| \nabla (v - v_{h}) \|.
	\end{equation*}
\end{lemma}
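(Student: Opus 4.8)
The plan is to reduce approximation in the constrained space $V_{h}$ to approximation in the full velocity space $X_{h}$ by correcting an arbitrary $X_{h}$-approximant so that it becomes discretely divergence-free, while paying only a constant controlled by $\beta$. Fix $v \in V$ and let $\tilde{v}_{h} \in X_{h}$ be arbitrary. Since $v \in V$ and $Q_{h} \subset Q$, we have $(q_{h}, \nabla \cdot v) = 0$ for every $q_{h} \in Q_{h}$, so the discrete divergence of the error satisfies $(q_{h}, \nabla \cdot (v - \tilde{v}_{h})) = -(q_{h}, \nabla \cdot \tilde{v}_{h})$. The idea is to find a correction $z_{h} \in X_{h}$ that exactly absorbs this discrete divergence and then set $v_{h} := \tilde{v}_{h} + z_{h}$; this $v_{h}$ will lie in $V_{h}$.

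First I would invoke the standard consequence of the discrete inf-sup condition (\ref{infsup}): the discrete divergence maps the orthogonal complement of $V_{h}$ in $X_{h}$ bijectively onto $Q_{h}$, with the inverse bounded by $1/\beta$. Concretely, I would seek the minimal-norm $z_{h} \in X_{h}$ (lying in that orthogonal complement) solving
\[
(q_{h}, \nabla \cdot z_{h}) = (q_{h}, \nabla \cdot (v - \tilde{v}_{h})) \qquad \forall q_{h} \in Q_{h},
\]
and obtain, applying (\ref{infsup}) to $z_{h}$ in the orthogonal complement,
\[
\beta \| \nabla z_{h} \| \leq \sup_{q_{h} \in Q_{h}} \frac{(q_{h}, \nabla \cdot z_{h})}{\| q_{h} \|} = \sup_{q_{h} \in Q_{h}} \frac{(q_{h}, \nabla \cdot (v - \tilde{v}_{h}))}{\| q_{h} \|} \leq \| \nabla \cdot (v - \tilde{v}_{h}) \| \leq C \| \nabla (v - \tilde{v}_{h}) \|,
\]
where the third inequality is Cauchy-Schwarz and the last uses $\| \nabla \cdot w \| \leq C \| \nabla w \|$. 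By construction $(q_{h}, \nabla \cdot v_{h}) = (q_{h}, \nabla \cdot \tilde{v}_{h}) + (q_{h}, \nabla \cdot (v - \tilde{v}_{h})) = (q_{h}, \nabla \cdot v) = 0$ for all $q_{h} \in Q_{h}$, so indeed $v_{h} \in V_{h}$.

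To finish, I would apply the triangle inequality,
\[
\| \nabla (v - v_{h}) \| \leq \| \nabla (v - \tilde{v}_{h}) \| + \| \nabla z_{h} \| \leq \Big( 1 + \frac{C}{\beta} \Big) \| \nabla (v - \tilde{v}_{h}) \|,
\]
and, since this holds for every $\tilde{v}_{h} \in X_{h}$ while the constructed $v_{h}$ lies in $V_{h}$, bound the left side by the $V_{h}$-infimum and take the infimum over $\tilde{v}_{h}$ on the right to conclude with $C(\beta) = 1 + C/\beta$. The one step that requires care, and which I regard as the crux, is the extraction of the bounded right inverse $z_{h}$ from (\ref{infsup}): one must argue that the discrete divergence operator restricted to the orthogonal complement of $V_{h}$ in $X_{h}$ is invertible onto $Q_{h}$ with inverse norm at most $1/\beta$, which is the dual reformulation of the inf-sup bound. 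Everything else, namely the Cauchy-Schwarz estimate, the verification $v_{h} \in V_{h}$, and the final triangle inequality, is routine.
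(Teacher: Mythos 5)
Your argument is correct and is essentially the classical proof: the paper itself does not prove this lemma but simply cites Girault--Raviart (Chapter II, Theorem 1.1), and the proof there is exactly your construction --- correct an arbitrary $\tilde{v}_{h} \in X_{h}$ by a discretely divergence-matching $z_{h}$ with $\| \nabla z_{h} \| \leq (\sqrt{d}/\beta)\, \| \nabla (v - \tilde{v}_{h}) \|$ obtained from the inf-sup condition, then apply the triangle inequality. You also correctly flag the only nontrivial step, namely passing from (\ref{infsup}) to the existence of a bounded right inverse of the discrete divergence on the complement of $V_{h}$, which is the standard dual reformulation (immediate in finite dimensions).
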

\begin{proof}
	See Chapter 2, Theorem 1.1 on p. 59 of \cite{Girault}.
\end{proof}
We will also assume that the mesh and finite element spaces satisfy the standard inverse inequality \cite{Ern}:
$$\| \nabla \chi_{1,2} \| \leq C_{inv,1,2}(\alpha_{min}) h^{-1} \| \chi_{1,2} \| \; \; \; \forall \chi_{1} \in X_{h}, \; \forall \chi_{2} \in W_{h},$$ 
\noindent where $\alpha_{min}$ denotes the minimum angle in the triangulation.  A discrete Gronwall inequality will play a role in the upcoming analysis.
\begin{lemma} \label{l4}
(Discrete Gronwall Lemma). Let $\Delta t$, H, $a_{n}$, $b_{n}$, $c_{n}$, and $d_{n}$ be finite nonnegative numbers for n $\geq$ 0 such that for N $\geq$ 1
\begin{align*}
a_{N} + \Delta t \sum^{N}_{0}b_{n} &\leq \Delta t \sum^{N-1}_{0} d_{n}a_{n} + \Delta t \sum^{N}_{0} c_{n} + H,
\end{align*}
then for all  $\Delta t > 0$ and N $\geq$ 1
\begin{align*}
a_{N} + \Delta t \sum^{N}_{0}b_{n} &\leq exp\big(\Delta t \sum^{N-1}_{0} d_{n}\big)\big(\Delta t \sum^{N}_{0} c_{n} + H\big).
\end{align*}
\end{lemma}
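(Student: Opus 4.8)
The plan is to prove the bound by induction on $N$, after first recasting the hypothesis as a self-contained recursion for the full left-hand side. Write $E_N := a_N + \Delta t \sum_{n=0}^N b_n$ for the quantity to be bounded and $\sigma_N := \Delta t \sum_{n=0}^N c_n + H$ for the accumulated data; since every $c_n \ge 0$ the sequence $\sigma_N$ is nondecreasing, a fact I will use repeatedly. Because the $b_n$ are nonnegative we have $a_n \le E_n$ for every $n$, and because the $d_n$ are nonnegative the hypothesis $E_N \le \Delta t \sum_{n=0}^{N-1} d_n a_n + \sigma_N$ upgrades to the closed recursion
\begin{equation*}
E_N \le \Delta t \sum_{n=0}^{N-1} d_n E_n + \sigma_N .
\end{equation*}
This step is the crux: inducting on the whole left-hand side $E_N$ rather than on $a_N$ alone is what lets the argument go through with no restriction on $\Delta t$.

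Next I would prove by induction that $E_N \le \sigma_N \prod_{n=0}^{N-1}(1+\Delta t d_n)$, where the empty product is $1$. The base case $E_0 \le \sigma_0$ is exactly the recursion at the initial index. For the inductive step I insert the inductive hypothesis $E_n \le \sigma_n \prod_{k=0}^{n-1}(1+\Delta t d_k) \le \sigma_N \prod_{k=0}^{n-1}(1+\Delta t d_k)$ (using $\sigma_n \le \sigma_N$) into the recursion, factor out $\sigma_N$, and invoke the telescoping identity
\begin{equation*}
\Delta t \sum_{n=0}^{N-1} d_n \prod_{k=0}^{n-1}(1+\Delta t d_k) + 1 = \prod_{n=0}^{N-1}(1+\Delta t d_n),
\end{equation*}
which follows from $\prod_{k=0}^{n}(1+\Delta t d_k) - \prod_{k=0}^{n-1}(1+\Delta t d_k) = \Delta t d_n \prod_{k=0}^{n-1}(1+\Delta t d_k)$ summed over $n$. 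This closes the induction and yields the product bound.

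Finally I would convert the product into the stated exponential using $1+x \le e^{x}$ for $x \ge 0$, so that $\prod_{n=0}^{N-1}(1+\Delta t d_n) \le \prod_{n=0}^{N-1} e^{\Delta t d_n} = \exp\!\big(\Delta t \sum_{n=0}^{N-1} d_n\big)$, giving $E_N \le \exp\!\big(\Delta t \sum_{n=0}^{N-1} d_n\big)\sigma_N$, which is precisely the claim.

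I expect no serious obstacle here; the only genuinely substantive point is the reformulation in the first paragraph. Because the hypothesis sums $d_n a_n$ only up to $N-1$, the term $d_N E_N$ never appears on the right, so the induction produces the clean product $\prod(1+\Delta t d_n)$ and never requires dividing by a factor such as $1-\Delta t d_N$; this is exactly why the conclusion holds for all $\Delta t > 0$ with the plain exponent rather than the restricted-timestep form with $\exp\big(\Delta t \sum d_n/(1-\Delta t d_n)\big)$. I would also take care to anchor the induction at the lowest available index (the $N=0$ instance, or, if only $N \ge 1$ is assumed, the $N=1$ instance together with $a_0 \le \sigma_1$), and to use the monotonicity of $\sigma_N$ and the nonnegativity of $b_n$ and $d_n$ at each substitution.
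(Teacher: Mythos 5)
Your argument is correct and complete, but it is worth noting that the paper does not actually prove this lemma at all: its ``proof'' is a citation to Lemma 5.1 of Heywood and Rannacher, so you have supplied the standard self-contained induction that the paper delegates to the literature. Your key observations are exactly the right ones: passing from $a_n$ to the full quantity $E_n = a_n + \Delta t\sum_{k=0}^{n} b_k$ (legitimate since $b_n, d_n \ge 0$) yields a closed recursion, the telescoping identity turns the recursion into the product bound $E_N \le \sigma_N \prod_{n=0}^{N-1}(1+\Delta t\, d_n)$, and $1+x\le e^x$ gives the exponential; and your closing remark correctly isolates why the sum stopping at $N-1$ is what removes any timestep restriction (the cited reference needs $\Delta t\, d_n < 1$ and the factors $\gamma_n = (1-\Delta t\, d_n)^{-1}$ precisely because its sum runs to $N$). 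The one soft spot is the base case: as stated, the lemma assumes the hypothesis only for $N\ge 1$, and then the $N=1$ instance $E_1 \le \Delta t\, d_0 a_0 + \sigma_1$ gives no control on $a_0$ whatsoever, so the conclusion can actually fail (take $b_n=c_n=H=0$, $a_0=d_0=\Delta t=a_1=1$). Your parenthetical fix ``together with $a_0 \le \sigma_1$'' does not follow from the stated hypotheses; what is really needed is the $N=0$ instance $a_0 + \Delta t\, b_0 \le \Delta t\, c_0 + H$, which is exactly what the Heywood--Rannacher formulation assumes (their hypothesis holds for $N\ge 0$). This is an imprecision inherited by the paper's statement rather than a flaw in your argument, but a careful write-up should say explicitly that the induction is anchored by assuming the inequality at $N=0$ as well.
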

\begin{proof}
	See Lemma 5.1 on p. 369 of \cite{Heywood}.
\end{proof}
The discrete time analysis will utilize the following norms $\forall \; 1 \leq k \leq \infty$:
\begin{align*}
\vertiii{v}_{\infty,k} &:= \max_{0\leq n \leq N} \| v^{n} \|_{k}, \;
\vertiii{v}_{p,k} := \big(\Delta t \sum^{N}_{n = 0} \| v^{n} \|^{p}_{k}\big)^{1/p}.
\end{align*}
\section{Numerical Scheme}
Denote the fully discrete solutions by $u^{n}_{h}$, $p^{n}_{h}$, and $T^{n}_{h}$ at time levels $t^{n} = n\Delta t$, $n = 0,1,...,N$, and $t^{\ast}=N\Delta t$.  Given $(u^{n}_{h}, p^{n}_{h}, T^{n}_{h})$ $\in (X_{h},Q_{h},W_{h})$, find $(u^{n+1}_{h}, p^{n+1}_{h}, T^{n+1}_{h})$ $\in (X_{h},Q_{h},W_{h})$ satisfying, for every $n = 0,1,...,N$, the fully discrete approximation of the \textbf{Thick wall problem:}
\begin{multline}\label{scheme:one:velocity}
(\frac{u^{n+1}_{h} - u^{n}_{h}}{\Delta t},v_{h}) + b(<u_{h}>^{n},u^{n+1}_{h},v_{h}) + b({u'}^{n}_{h},u^{n}_{h},v_{h}) + Pr(\nabla u^{n+1}_{h},\nabla v_{h}) - (p^{n+1}_{h}, \nabla \cdot v_{h})
\\  =  PrRa(\gamma T^{n+1}_{h},v_{h}) + (f^{n+1},v_{h}) \; \; \forall v_{h} \in X_{h},
\end{multline}
\begin{align}
(q_{h}, \nabla \cdot u^{n+1}_{h}) = 0 \; \; \forall q_{h} \in Q_{h},
\end{align}
\begin{multline}\label{scheme:one:temperature}
(\frac{T^{n+1}_{h} - T^{n}_{h}}{\Delta t},S_{h}) + b^{\ast}(<u_{h}>^{n},T^{n+1}_{h},S_{h}) + b^{\ast}({u'}^{n}_{h},T^{n}_{h},S_{h}) + \kappa (\nabla T^{n+1}_{h},\nabla S_{h})  
\\ = (g^{n+1},S_{h}) \; \; \forall S_{h} \in W_{h}.
\end{multline}

\noindent \textbf{Thin wall problem:}
\begin{multline}\label{scheme:two:velocity}
(\frac{u^{n+1}_{h} - u^{n}_{h}}{\Delta t},v_{h}) + b(<u_{h}>^{n},u^{n+1}_{h},v_{h}) + b({u'}^{n}_{h},u^{n}_{h},v_{h}) + Pr(\nabla u^{n+1}_{h},\nabla v_{h}) - (p^{n+1}_{h}, \nabla \cdot v_{h}) 
\\ =  PrRa(\gamma T^{n}_{h},v_{h}) + (f^{n+1},v_{h}) \; \; \forall v_{h} \in X_{h},
\end{multline}
\begin{equation}
(q_{h}, \nabla \cdot u^{n+1}_{h}) = 0 \; \; \forall q_{h} \in Q_{h},
\end{equation}
\begin{multline}\label{scheme:two:temperature}
(\frac{T^{n+1}_{h} - T^{n}_{h}}{\Delta t},S_{h}) + b^{\ast}(<u_{h}>^{n},T^{n+1}_{h},S_{h}) + b^{\ast}({u'}^{n}_{h},T^{n}_{h},S_{h}) + \kappa (\nabla T^{n+1}_{h},\nabla S_{h})  
\\ = (u^{n}_{1},S_{h}) + (g^{n+1},S_{h}) \; \; \forall S_{h} \in W_{h}.
\end{multline}
\textbf{Remark:} The treatment of the nonlinear terms in the time discretizations (\ref{d1a}) - (\ref{d2a}) and (\ref{d1}) - (\ref{d2}) leads to a shared coefficient matrix independent of the ensemble members.
\section{Numerical Analysis of the Ensemble Algorithm}
We present stability results for the aforementioned algorithms under the following timestep condition:
\begin{align}
\frac{C_{\dagger} \Delta t}{h} \max_{1 \leq j \leq J} \|\nabla {u'}^{n}_{h}\|^{2} \leq 1,\label{c1}
\end{align}
\noindent where $C_{\dagger} \equiv C_{\dagger}(|\Omega|, \alpha_{min},\kappa,Pr)$.
In Theorems \ref{t1} and \ref{t2}, the nonlinear stability of the velocity, temperature, and pressure approximations are proven under condition (\ref{c1}) for the thick wall (\ref{scheme:one:velocity}) - (\ref{scheme:one:temperature}) and thin wall problems (\ref{scheme:two:velocity}) - (\ref{scheme:two:temperature}), respectively.  

\noindent \textbf{Remark:}  Stability of the numerical approximations can also be proven under:
$\frac{J C_{\dagger} \Delta t}{h} < \|\nabla {u'}^{n}_{h}\|^{2} > \leq 1$.  If $C_{\dagger}/J \geq 1$, then $JC_{\dagger}$ can be replaced with $C_{\dagger}$.

\subsection{Stability Analysis}

\begin{theorem} \label{t1}
Consider the \textbf{Thick wall problem} (\ref{scheme:one:velocity}) - (\ref{scheme:one:temperature}).   Suppose $f \in L^{\infty}(0,t^{\ast};H^{-1}(\Omega)^{d})$, $g \in L^{\infty}(0,t^{\ast};H^{-1}(\Omega))$.  If (\ref{scheme:one:velocity}) - (\ref{scheme:one:temperature}) satisfy condition (\ref{c1}), then
\begin{multline*}
\|T^{N}_{h}\|^{2} + \|u^{N}_{h}\|^{2} + \frac{1}{2}\sum_{n = 0}^{N-1}\big(\|T^{n+1}_{h} - T^{n}_{h}\|^{2} + \|u^{n+1}_{h} - u^{n}_{h}\|^{2}\big) + {\kappa \Delta t} \|\nabla T^{N}_{h}\|^{2} + {Pr \Delta t} \|\nabla u^{N}_{h}\|^{2} 
\\ \leq 2 \Delta t Pr Ra^{2}C_{PF,1}^2\sum^{N-1}_{n=0}\big(\frac{\Delta t}{\kappa} \sum_{k = 0}^{n} \|g^{k+1}\|^{2}_{-1} + \|T^{0}_{h}\|^{2} + {\kappa \Delta t} \|\nabla T^{0}_{h}\|^{2}\big) + \frac{2 \Delta t}{Pr} \sum_{n=0}^{N-1} \|f^{n+1} \|^{2}_{-1} + \|u^{0}_{h}\|^{2}
\\ + {Pr \Delta t} \|\nabla u^{0}_{h}\|^{2} + \|T^{0}_{h}\|^{2} + {\kappa \Delta t} \|\nabla T^{0}_{h}\|^{2}.
\end{multline*}
\noindent Further,
\begin{multline*}
\beta \Delta t \sum^{N-1}_{n=0} \| p^{n+1}_{h}\| \leq 2 \Delta t \sum^{N-1}_{n=0} \Big( C_{1} \| \nabla <u_{h}>^{n} \| \| \nabla u^{n+1}_{h} \| + C_{1} \| \nabla  {u'}^{n}_{h} \|\|\nabla u^{n}_{h} \| 
\\ + Pr \|\nabla u^{n+1}_{h} \| +  PrRaC_{PF,1} \|T^{n+1}_{h} \| + \|f^{n+1} \|_{-1}\Big).
\end{multline*}
\end{theorem}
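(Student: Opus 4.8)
The plan is to derive separate energy estimates for the temperature and velocity approximations, combine them, and then extract the pressure bound from the discrete inf-sup condition. First I would set $S_{h} = T^{n+1}_{h}$ in the temperature equation (\ref{scheme:one:temperature}). The transport term $b^{\ast}(<u_{h}>^{n}, T^{n+1}_{h}, T^{n+1}_{h})$ vanishes by skew-symmetry, and the time-difference term yields, via the polarization identity $2(a-b,a) = \|a\|^{2} - \|b\|^{2} + \|a-b\|^{2}$, the quantity $\frac{1}{2\Delta t}(\|T^{n+1}_{h}\|^{2} - \|T^{n}_{h}\|^{2} + \|T^{n+1}_{h} - T^{n}_{h}\|^{2})$. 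The remaining term $b^{\ast}({u'}^{n}_{h}, T^{n}_{h}, T^{n+1}_{h})$ is the crux: using skew-symmetry again, $b^{\ast}({u'}^{n}_{h}, T^{n}_{h}, T^{n}_{h}) = 0$, so I would rewrite it as $b^{\ast}({u'}^{n}_{h}, T^{n}_{h}, T^{n+1}_{h} - T^{n}_{h})$ and bound it with the last estimate of Lemma \ref{l1}, giving $C_{6}\|\nabla {u'}^{n}_{h}\|\|\nabla T^{n}_{h}\|\sqrt{\|T^{n+1}_{h} - T^{n}_{h}\|\|\nabla(T^{n+1}_{h} - T^{n}_{h})\|}$.

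Next I would apply the inverse inequality to replace $\|\nabla(T^{n+1}_{h} - T^{n}_{h})\|$ by $C_{inv,1,2}h^{-1}\|T^{n+1}_{h} - T^{n}_{h}\|$ and then use Young's inequality to split this into a multiple of $\|T^{n+1}_{h} - T^{n}_{h}\|^{2}$, absorbed by the polarization term on the left, and a term proportional to $\frac{\Delta t}{h}\|\nabla {u'}^{n}_{h}\|^{2}\|\nabla T^{n}_{h}\|^{2}$. This is precisely where condition (\ref{c1}) enters: choosing $C_{\dagger}$ large enough (depending on $\kappa$, $Pr$, $|\Omega|$, $\alpha_{min}$ so that both the temperature and velocity absorptions succeed) forces $\frac{\Delta t}{h}\|\nabla {u'}^{n}_{h}\|^{2}\|\nabla T^{n}_{h}\|^{2} \leq \frac{\kappa}{2}\|\nabla T^{n}_{h}\|^{2}$, so the residual is controlled by the diffusion. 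Bounding the source via $(g^{n+1}, T^{n+1}_{h}) \leq \frac{1}{2\kappa}\|g^{n+1}\|_{-1}^{2} + \frac{\kappa}{2}\|\nabla T^{n+1}_{h}\|^{2}$, multiplying by $2\Delta t$, and summing the resulting telescoping inequality from $0$ to $N-1$ yields $\|T^{N}_{h}\|^{2} + \frac{1}{2}\sum\|T^{n+1}_{h} - T^{n}_{h}\|^{2} + \kappa\Delta t\|\nabla T^{N}_{h}\|^{2} \leq \frac{\Delta t}{\kappa}\sum\|g^{n+1}\|_{-1}^{2} + \|T^{0}_{h}\|^{2} + \kappa\Delta t\|\nabla T^{0}_{h}\|^{2}$; the same inequality with $N$ replaced by an intermediate level $n+1$ gives the pointwise-in-time bound on $\|T^{n+1}_{h}\|^{2}$ appearing inside the double sum of the final statement.

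I would then repeat the identical procedure on the momentum equation (\ref{scheme:one:velocity}) with $v_{h} = u^{n+1}_{h} \in V_{h}$, so the pressure term drops out. The transport term again vanishes by skew-symmetry; the fluctuation term $b({u'}^{n}_{h}, u^{n}_{h}, u^{n+1}_{h} - u^{n}_{h})$ is controlled exactly as above using the $C_{5}$ bound of Lemma \ref{l1}, the inverse inequality, and condition (\ref{c1}). The buoyancy coupling is handled by $PrRa(\gamma T^{n+1}_{h}, u^{n+1}_{h}) \leq PrRaC_{PF,1}\|T^{n+1}_{h}\|\|\nabla u^{n+1}_{h}\| \leq \frac{Pr}{4}\|\nabla u^{n+1}_{h}\|^{2} + PrRa^{2}C_{PF,1}^{2}\|T^{n+1}_{h}\|^{2}$ and the forcing by $(f^{n+1}, u^{n+1}_{h}) \leq \frac{Pr}{4}\|\nabla u^{n+1}_{h}\|^{2} + \frac{1}{Pr}\|f^{n+1}\|_{-1}^{2}$. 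Summing produces a velocity estimate whose right-hand side contains $2PrRa^{2}C_{PF,1}^{2}\Delta t\sum_{n=0}^{N-1}\|T^{n+1}_{h}\|^{2}$; substituting the pointwise temperature bound converts this into the triple-nested sum displayed in the theorem, and adding the temperature estimate gives the full combined bound.

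For the pressure estimate, I would fix a time level and isolate the pressure term in (\ref{scheme:one:velocity}), so that for every $v_{h} \in X_{h}$ the quantity $(p^{n+1}_{h}, \nabla \cdot v_{h})$ equals the momentum residual. Dividing by $\|\nabla v_{h}\|$, taking the supremum, and invoking (\ref{infsup}) bounds $\beta\|p^{n+1}_{h}\|$ by the sum of the residual terms; each is estimated by Cauchy--Schwarz together with the continuity bound $b(\cdot,\cdot,\cdot) \leq C_{1}\|\nabla\cdot\|\|\nabla\cdot\|\|\nabla\cdot\|$ of Lemma \ref{l1} and the Poincar\'e--Friedrichs inequality for the buoyancy term, producing exactly the listed terms; multiplying by $\Delta t$ and summing over $n$ gives the stated bound. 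The main obstacle throughout is the single step handling the fluctuation trilinear terms: it is the combination of the sharpened $\sqrt{\|w\|\|\nabla w\|}$ estimates of Lemma \ref{l1} with the inverse inequality that manufactures the factor $\frac{\Delta t}{h}\|\nabla {u'}^{n}_{h}\|^{2}$, and verifying that condition (\ref{c1}) exactly suffices to absorb it into the diffusion terms---while tracking whether each residual lands at level $n$ or $n+1$ so the telescoping sum closes---is the delicate part of the argument.
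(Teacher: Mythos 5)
Your proposal is correct and follows essentially the same route as the paper's proof: the polarization identity, the skew-symmetry rewrite $b^{\ast}({u'}^{n}_{h},T^{n}_{h},T^{n+1}_{h}) = b^{\ast}({u'}^{n}_{h},T^{n}_{h},T^{n+1}_{h}-T^{n}_{h})$ combined with the $\sqrt{\|\cdot\|\|\nabla\cdot\|}$ bound of Lemma \ref{l1} and the inverse inequality, absorption via condition (\ref{c1}) after adding and subtracting the lagged gradient term, substitution of the pointwise temperature bound into the velocity estimate to produce the nested sum, and the inf-sup argument for the pressure. The only detail you gloss over is that the paper first bounds the discrete time-difference $(u^{n+1}_{h}-u^{n}_{h},v_{h})$ in the $V^{\ast}_{h}$ norm by the same residual terms, which is the source of the overall factor of $2$ in the pressure estimate.
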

\begin{proof}
Let $S_{h} = T^{n+1}_{h}$ in equation (\ref{scheme:one:temperature}) and use the polarization identity.  Multiply by $\Delta t$ on both sides and rearrange.  Then,
\begin{align}
\frac{1}{2} \Big\{\|T^{n+1}_{h}\|^{2} - \|T^{n}_{h}\|^{2} + \|T^{n+1}_{h} - T^{n}_{h}\|^{2}\Big\} + \kappa  \Delta t \|\nabla T^{n+1}_{h}\|^{2} =  \Delta t (g^{n+1},T^{n+1}_{h}) - \Delta t b^{\ast}({u'}^{n}_{h},T^{n}_{h},T^{n+1}_{h}). \label{stability:thick}
\end{align}
Use Cauchy-Schwarz-Young on $\Delta t (g^{n+1},T^{n+1}_{h})$,
\begin{align}
\Delta t (g^{n+1},T^{n+1}_{h}) \leq \frac{\Delta t}{2\epsilon} \|g^{n+1}\|^{2}_{-1} + \frac{\Delta t \epsilon}{2} \|\nabla T^{n+1}_{h}\|^{2}. \label{stability:thick:estg}
\end{align}
Consider $-\Delta t b^{\ast}({u'}^{n}_{h},T^{n}_{h},T^{n+1}_{h})$.  Add and subtract $-\Delta t b^{\ast}({u'}^{n}_{h},T^{n}_{h},T^{n}_{h})$, use skew-symmetry, Lemma \ref{l1}, the inverse inequality, and the Cauchy-Schwarz-Young inequality.  Then,
\begin{align}
|-\Delta t b^{\ast}({u'}^{n}_{h},T^{n}_{h},T^{n+1}_{h})| &= |-\Delta t b^{\ast}({u'}^{n}_{h},T^{n}_{h},T^{n+1}_{h} - T^{n}_{h})| \label{stability:thick:estbstar}
\\ &\leq \Delta t C_{6} \|\nabla {u'}^{n}_{h}\| \|\nabla T^{n}_{h}\| \sqrt{\|T^{n+1}_{h} - T^{n}_{h}\| \| \nabla (T^{n+1}_{h} - T^{n}_{h})\|} \notag
\\ &\leq  \frac{\Delta t C_{6} C^{1/2}_{inv,2}}{h^{1/2}} \|\nabla {u'}^{n}_{h}\| \|\nabla T^{n}_{h}\| \|T^{n+1}_{h} - T^{n}_{h}\| \notag
\\ &\leq \frac{C_{6}^{2} C_{inv,2} \Delta t^{2}}{h} \|\nabla {u'}^{n}_{h}\|^{2} \|\nabla T^{n}_{h}\|^{2} + \frac{1}{4} \|T^{n+1}_{h} - T^{n}_{h}\|^{2}. \notag 
\end{align}
\noindent Using (\ref{stability:thick:estg}) and (\ref{stability:thick:estbstar}) in (\ref{stability:thick}) leads to
\begin{multline*}
\frac{1}{2} \Big\{\|T^{n+1}_{h}\|^{2} - \|T^{n}_{h}\|^{2} + \|T^{n+1}_{h} - T^{n}_{h}\|^{2}\Big\} + \kappa \Delta t \|\nabla T^{n+1}_{h}\|^{2} \leq \frac{\Delta t}{2\epsilon} \|g^{n+1}\|^{2}_{-1} 
\\ + \frac{\Delta t \epsilon}{2} \|\nabla T^{n+1}_{h}\|^{2} + \frac{C_{6}^{2} C_{inv,2} \Delta t^{2}}{h} \|\nabla {u'}^{n}_{h}\|^{2} \|\nabla T^{n}_{h}\|^{2} + \frac{1}{4} \|T^{n+1}_{h} - T^{n}_{h}\|^{2}.
\end{multline*}
\noindent Let $\epsilon = \kappa$, add and subtract $\frac{\kappa \Delta t}{2}\|\nabla T^{n}_{h}\|^{2}$ to the l.h.s.  Regrouping terms leads to
\begin{multline*}
\frac{1}{2} \Big\{\|T^{n+1}_{h}\|^{2} - \|T^{n}_{h}\|^{2}\Big\} +\frac{1}{4}\|T^{n+1}_{h} - T^{n}_{h}\|^{2} + \frac{\kappa \Delta t}{2} \Big\{ \|\nabla T^{n+1}_{h}\|^{2} - \|\nabla T^{n}_{h}\|^{2} \Big\}
\\ + \frac{\kappa \Delta t}{2} \|\nabla T^{n}_{h} \|^{2} \Big[1 - \frac{2 C_{6}^{2} C_{inv,2} \Delta t}{\kappa h} 
\|\nabla {u'}^{n}_{h}\|^{2} \Big] \leq \frac{\Delta t}{2\kappa} \|g^{n+1}\|^{2}_{-1}.
\end{multline*}
\noindent By hypothesis, $\frac{2 C_{6}^{2} C_{inv,2} \Delta t}{\kappa h} \|\nabla {u'}^{n}_{h}\|^{2} \leq 1$.  Thus,
\begin{equation*}
\frac{1}{2} \Big\{\|T^{n+1}_{h}\|^{2} - \|T^{n}_{h}\|^{2}\Big\} +\frac{1}{4}\|T^{n+1}_{h} - T^{n}_{h}\|^{2} + \frac{\kappa \Delta t}{2} \Big\{ \|\nabla T^{n+1}_{h}\|^{2} - \|\nabla T^{n}_{h}\|^{2} \Big\} \leq \frac{\Delta t}{2\kappa} \|g^{n+1}\|^{2}_{-1}.
\end{equation*}
\noindent Sum from $n = 0$ to $n = N-1$ and put all data on the right hand side.  This yields
\begin{equation} \label{stability:thick:temp}
\frac{1}{2}\|T^{N}_{h}\|^{2} +\frac{1}{4}\sum_{n = 0}^{N-1}\|T^{n+1}_{h} - T^{n}_{h}\|^{2} + \frac{\kappa \Delta t}{2} \|\nabla T^{N}_{h}\|^{2} \leq \frac{\Delta t}{2\kappa} \sum_{n = 0}^{N-1} \|g^{n+1}\|^{2}_{-1} + \frac{1}{2}\|T^{0}_{h}\|^{2} + \frac{\kappa \Delta t}{2} \|\nabla T^{0}_{h}\|^{2}.
\end{equation}
\noindent Therefore, the l.h.s. is bounded by data on the r.h.s. The temperature approximation is stable.

We follow an almost identical form of attack for the velocity as we did for the temperature.  Let $v_{h} = u^{n+1}_{h} \in V_{h}$ in (\ref{scheme:one:velocity}) and use the polarization identity.  Multiply by $\Delta t$ on both sides and rearrange terms.  Then,
\begin{multline} \label{stability:thicku}
\frac{1}{2} \Big\{\|u^{n+1}_{h}\|^{2} - \|u^{n}_{h}\|^{2} + \|u^{n+1}_{h} - u^{n}_{h}\|^{2}\Big\} + \Delta t Pr \|\nabla u^{n+1}_{h}\|^{2} 
\\ = -\Delta t b({u'}^{n}_{h},u^{n}_{h},u^{n+1}_{h}) + \Delta t PrRa(\gamma T^{n+1}_{h}, u^{n+1}_{h}) + \Delta t (f^{n+1},u^{n+1}_{h}).
\end{multline}
Use the Cauchy-Schwarz-Young inequality on $\Delta t PrRa(\gamma T^{n+1}_{h}, u^{n+1}_{h})$ and $\Delta t (f^{n+1},u^{n+1}_{h})$ and note that $\lvert \gamma \rvert = 1$,
\begin{align}
\Delta t PrRa(\gamma T^{n+1}_{h}, u^{n+1}_{h}) &\leq \frac{\Delta t Pr^{2}Ra^{2}C_{PF,1}^2}{2 \epsilon} \|T^{n+1}_{h} \|^{2} + \frac{\Delta t \epsilon}{2} \| \nabla u^{n+1}_{h} \|^{2}, \label{stability:thick:estT}
\\ \Delta t (f^{n+1},u^{n+1}_{h}) &\leq \frac{\Delta t}{2 \epsilon} \|f^{n+1} \|^{2}_{-1} + \frac{\Delta t \epsilon}{2} \| \nabla u^{n+1}_{h} \|^{2}. \label{stability:thick:estf}
\end{align}
Using skew-symmetry, Lemma \ref{l1}, the inverse inequality, and the Cauchy-Schwarz-Young inequality on $\Delta t b({u'}^{n}_{h},u^{n}_{h},u^{n+1}_{h})$ leads to
\begin{align}
|-\Delta t b({u'}^{n}_{h},u^{n}_{h},u^{n+1}_{h})| \leq \frac{C_{5}^{2} C_{inv,1} \Delta t^{2}}{h} \|\nabla {u'}^{n}_{h}\|^{2} \|\nabla u^{n}_{h}\|^{2} + \frac{1}{4} \|u^{n+1}_{h} - u^{n}_{h}\|^{2}. \label{stability:thick:estb}
\end{align}
Using (\ref{stability:thick:estT}), (\ref{stability:thick:estf}), and (\ref{stability:thick:estb}) in (\ref{stability:thicku}) leads to
\begin{multline*}
\frac{1}{2} \Big\{\|u^{n+1}_{h}\|^{2} - \|u^{n}_{h}\|^{2} + \|u^{n+1}_{h} - u^{n}_{h}\|^{2}\Big\} + Pr \Delta t \|\nabla u^{n+1}_{h}\|^{2} + \leq \frac{\Delta t Pr^{2}Ra^{2}C_{PF,1}^2}{2 \epsilon} \|T^{n+1}_{h} \|^{2} + \frac{\Delta t}{2 \epsilon} \|f^{n+1} \|^{2}_{-1}
\\ + \Delta t \epsilon \|\nabla u^{n+1}_{h} \|^{2} + \frac{C_{5}^{2} C_{inv,1} \Delta t^{2}}{h} \|\nabla {u'}^{n}_{h}\|^{2} \|\nabla u^{n}_{h}\|^{2} + \frac{1}{4} \|u^{n+1}_{h} - u^{n}_{h}\|^{2}.
\end{multline*}
Let $\epsilon = Pr/2$, add and subtract $\frac{Pr \Delta t}{2}\|\nabla u^{n}_{h}\|^{2}$ to the l.h.s., and regroup terms.  Then,
\begin{multline*}
\frac{1}{2} \Big\{\|u^{n+1}_{h}\|^{2} - \|u^{n}_{h}\|^{2}\Big\} +\frac{1}{4}\|u^{n+1}_{h} - u^{n}_{h}\|^{2} + \frac{Pr \Delta t}{2} \Big\{ \|\nabla u^{n+1}_{h}\|^{2} - \|\nabla u^{n}_{h}\|^{2} \Big\}
\\ + \frac{Pr \Delta t}{2} \|\nabla u^{n}_{h} \|^{2} \Big[1 - \frac{2 C_{5}^{2} C_{inv,1}  \Delta t}{Pr h} 
\|\nabla {u'}^{n}_{h}\|^{2} \Big] \leq \Delta t PrRa^{2} C_{PF,1}^2 \|T^{n+1}_{h} \|^{2} + \frac{\Delta t}{Pr} \|f^{n+1} \|^{2}_{-1}.
\end{multline*}
\noindent By hypothesis, $\frac{2 C_{5}^{2} C_{inv,1} \Delta t}{Pr h} \|\nabla {u'}^{n}_{h}\|^{2} \leq 1$.  Thus,
\begin{multline*}
\frac{1}{2} \Big\{\|u^{n+1}_{h}\|^{2} - \|u^{n}_{h}\|^{2}\Big\} +\frac{1}{4}\|u^{n+1}_{h} - u^{n}_{h}\|^{2} + \frac{Pr \Delta t}{2} \Big\{ \|\nabla u^{n+1}_{h}\|^{2} - \|\nabla u^{n}_{h}\|^{2} \Big\} 
\\ \leq \Delta t PrRa^{2}C_{PF,1}^2 \|T^{n+1}_{h} \|^{2} + \frac{\Delta t}{Pr} \|f^{n+1} \|^{2}_{-1}.
\end{multline*}
\noindent Summing from $n = 0$ to $n = N-1$ and putting all data on r.h.s. yields
\begin{multline} \label{stability:thick:vel}
\frac{1}{2}\|u^{N}_{h}\|^{2} +\frac{1}{4}\sum_{n = 0}^{N-1}\|u^{n+1}_{h} - u^{n}_{h}\|^{2} + \frac{Pr \Delta t}{2} \|\nabla u^{N}_{h}\|^{2} \leq \Delta t PrRa^{2}C_{PF,1}^2 \sum_{n=0}^{N-1} \|T^{n+1}_{h} \|^{2} + \frac{\Delta t}{Pr} \sum_{n=0}^{N-1} \|f^{n+1} \|^{2}_{-1} 
\\ + \frac{1}{2}\|u^{0}_{h}\|^{2} + \frac{Pr \Delta t}{2} \|\nabla u^{0}_{h}\|^{2}.
\end{multline}
\noindent Together with the stability of the temperature approximation, the l.h.s. is bounded above by data; that is, the velocity approximation is stable.  Adding (\ref{stability:thick:temp}) and (\ref{stability:thick:vel}) and mutliplying by 2 yields the result.  We now prove stability of the pressure approximation.  We first form an estimate for the discrete time derivative term.  Consider (\ref{scheme:one:velocity}), isolate $(\frac{u^{n+1}_{h} - u^{n}_{h}}{\Delta t},v_{h})$, let $0 \neq v_{h} \in V_{h}$, and multiply by $\Delta t$.  Then,
\begin{multline}\label{vv}
	({u^{n+1}_{h} - u^{n}_{h}},v_{h}) = -\Delta t b(<u_{h}>^{n},u^{n+1}_{h},v_{h}) - \Delta t b({u'}^{n}_{h},u^{n}_{h},v_{h}) 
	\\ - \Delta t Pr(\nabla u^{n+1}_{h},\nabla v_{h}) + \Delta t PrRa(\gamma T^{n+1}_{h},v_{h}) + \Delta t (f^{n+1},v_{h}).
\end{multline}
Applying Lemma \ref{l1} to the skew-symmetric trilinear terms and the Cauchy-Schwarz and Poincar\'{e}-Friedrichs inequalities to the remaining terms yields
\begin{align}
|-\Delta t b(<u_{h}>^{n},u^{n+1}_{h},v_{h})| &\leq C_{1} \Delta t \| \nabla <u_{h}>^{n} \| \| \nabla u^{n+1}_{h} \| \| \nabla v_{h}\|,\label{stability:thick:estmean}\\
|-\Delta t b({u'}^{n}_{h},u^{n}_{h},v_{h})| &\leq C_{1} \Delta t \| \nabla {u'}^{n}_{h} \| \| \nabla  u^{n}_{h} \| \| \nabla v_{h}\|,\label{stability:thick:estfluc}\\
|-\Delta t Pr(\nabla u^{n+1}_{h},\nabla v_{h})| &\leq Pr \Delta t \|\nabla u^{n+1}_{h} \| \|\nabla v_{h} \|, \label{stability:thick:estvisc}\\
|\Delta t PrRa(\gamma T^{n+1}_{h},v_{h})| &\leq PrRa \Delta t \|T^{n+1}_{h} \| \|v_{h} \| \leq PrRaC_{PF,1} \Delta t \|T^{n+1}_{h} \| \|\nabla v_{h} \|, \label{stability:thick:estcoupling}\\
|\Delta t(f^{n+1},v_{h})| &\leq \Delta t \|f^{n+1} \|_{-1} \|\nabla v_{h} \| \label{stability:thick:estf2}.
\end{align}
Apply the above estimates in (\ref{vv}), divide by the common factor $\|\nabla v_{h} \|$ on both sides, and take the supremum over all $0 \neq v_{h} \in V_{h}$.  Then,
\begin{multline}
\|u^{n+1}_{h} - u^{n}_{h}\|_{V^{\ast}_{h}} \leq C_{1} \Delta t \| \nabla <u_{h}>^{n} \| \| \nabla u^{n+1}_{h} \| + C_{1} \Delta t \|\nabla {u'}^{n}_{h} \| \| \nabla  u^{n}_{h} \| 
\\ + Pr \Delta t \|\nabla u^{n+1}_{h} \| +  PrRaC_{PF,1} \Delta t \|T^{n+1}_{h} \| + \Delta t \|f^{n+1} \|_{-1}.
\end{multline}
\noindent Reconsider equation (\ref{scheme:one:velocity}).  Multiply by $\Delta t$ and isolate the pressure term,
\begin{multline}
\Delta t (p^{n+1}_{h}, \nabla \cdot v_{h}) = (u^{n+1}_{h} - u^{n}_{h},v_{h}) + \Delta t b(<u_{h}>^{n},u^{n+1}_{h},v_{h}) + \Delta t b({u'}^{n}_{h},u^{n}_{h},v_{h}) 
\\ + Pr \Delta t(\nabla u^{n+1}_{h},\nabla v_{h}) - PrRa \Delta t(\gamma T^{n+1}_{h},v_{h}) - \Delta t(f^{n+1},v_{h}).
\end{multline}
\noindent Apply (\ref{stability:thick:estmean}) - (\ref{stability:thick:estf2}) on the r.h.s terms.  Then,
\begin{multline}
\Delta t (p^{n+1}_{h}, \nabla \cdot v_{h}) \leq (u^{n+1}_{h} - u^{n}_{h},v_{h}) + \Big(C_{1} \Delta t \| \nabla <u_{h}>^{n} \| \| \nabla u^{n+1}_{h} \| + C_{1} \Delta t \|\nabla {u'}^{n}_{h} \| \| \nabla  u^{n}_{h} \| 
\\ + Pr \Delta t \|\nabla u^{n+1}_{h} \| +  PrRaC_{PF,1} \Delta t \|T^{n+1}_{h} \| + \Delta t \|f^{n+1} \|_{-1}\Big)\|\nabla v_{h} \|.
\end{multline}
\noindent Divide by $\|\nabla v_{h} \|$ and note that $\frac{(u^{n+1}_{h} - u^{n}_{h},v_{h})}{\| \nabla v_{h} \|} \leq \|u^{n+1}_{h} - u^{n}_{h}\|_{V^{\ast}_{h}}$.  Take the supremum over all $0 \neq v_{h} \in X_{h}$,
\begin{multline}
\Delta t \sup_{0 \neq v_{h} \in X_{h}}\frac{(p^{n+1}_{h}, \nabla \cdot v_{h})}{\|\nabla v_{h} \|} \leq 2 \Big(C_{1} \Delta t \| \nabla <u_{h}>^{n} \| \| \nabla u^{n+1}_{h} \| + C_{1} \Delta t \|\nabla {u'}^{n}_{h} \| \| \nabla  u^{n}_{h} \| 
\\ + Pr \Delta t \|\nabla u^{n+1}_{h} \| +  PrRaC_{PF,1} \Delta t \|T^{n+1}_{h} \| + \Delta t \|f^{n+1} \|_{-1}\Big).
\end{multline}
\noindent Use the discrete inf-sup condition (\ref{infsup}),
\begin{multline}
\beta \Delta t \| p^{n+1}_{h}\| \leq 2 \Big( C_{1} \Delta t \| \nabla <u_{h}>^{n} \| \| \nabla u^{n+1}_{h} \| + C_{1} \Delta t \|\nabla {u'}^{n}_{h} \| \| \nabla  u^{n}_{h} \| 
\\ + Pr \Delta t \|\nabla u^{n+1}_{h} \| +  PrRaC_{PF,1} \Delta t \|T^{n+1}_{h} \| + \Delta t \|f^{n+1} \|_{-1}\Big).
\end{multline}
\noindent Summing from $n = 0$ to $n = N-1$ yields stability of the pressure approximation, built on the stability of the temperature and velocity approximations.
\end{proof}

\begin{theorem} \label{t2}
Consider the \textbf{Thin wall problem} (\ref{scheme:two:velocity}) - (\ref{scheme:two:temperature}).  Suppose $f \in L^{\infty}(0,t^{\ast};H^{-1}(\Omega)^{d})$ and $g \in L^{\infty}(0,t^{\ast};H^{-1}(\Omega))$.  If (\ref{scheme:two:velocity}) - (\ref{scheme:two:temperature}) satisfy condition (\ref{c1}), then
\begin{multline*}
\|T^{N}_{h}\|^{2} + \|u^{N}_{h}\|^{2}  + \frac{1}{2}\sum_{n = 0}^{N-1}\big(\|T^{n+1}_{h} - T^{n}_{h}\|^{2} + \|u^{n+1}_{h} - u^{n}_{h}\|^{2}\big) + \kappa \Delta t \|\nabla T^{N}_{h}\|^{2} + Pr \Delta t \|\nabla u^{N}_{h}\|^{2} \\
\leq exp(2 C t^{\ast}) \Big\{  \Delta t \sum_{n = 0}^{N-1} ( \frac{1}{Pr} \| f^{n+1} \|_{-1}^{2} + \frac{1}{\kappa} \| g^{n+1} \|_{-1}^{2})  + \|u^{0}_{h} \|^{2} + \| T^{0}_{h} \|^{2} 
\\ + Pr \Delta t \|\nabla u^{0}_{h}\|^{2} + \kappa \Delta t \|\nabla T^{0}_{h}\|^{2}\Big\}.
\end{multline*}
\noindent Further,
\begin{multline*}
\beta \Delta t \sum^{N-1}_{n=0} \| p^{n+1}_{h}\| \leq 2 \sum^{N-1}_{n=0} \Big( C_{1} \Delta t \| \nabla <u_{h}>^{n} \| \| \nabla u^{n+1}_{h} \| + C_{1} \Delta t \|\nabla {u'}^{n}_{h} \| \| \nabla  u^{n}_{h} \| 
\\ + Pr \Delta t \|\nabla u^{n+1}_{h} \| +  PrRaC_{PF,1} \Delta t \|T^{n}_{h} \| + \Delta t \|f^{n+1} \|_{-1}\Big).
\end{multline*}
\end{theorem}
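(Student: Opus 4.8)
The plan is to adapt the energy argument of Theorem \ref{t1} to each of the two evolution equations, the essential new feature being that the thin-wall coupling prevents the temperature estimate from closing on its own; the velocity and temperature energies must be summed and the discrete Gronwall Lemma \ref{l4} invoked. I would first derive a per-step temperature inequality, then a per-step velocity inequality, add them, telescope, and apply Lemma \ref{l4}; the pressure bound then follows exactly as in Theorem \ref{t1}.

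For the temperature, set $S_{h}=T^{n+1}_{h}$ in (\ref{scheme:two:temperature}), apply the polarization identity, and multiply by $\Delta t$. The viscous term $\kappa\Delta t\|\nabla T^{n+1}_{h}\|^{2}$, the source $(g^{n+1},T^{n+1}_{h})$, and the fluctuation term $b^{\ast}({u'}^{n}_{h},T^{n}_{h},T^{n+1}_{h})$ are treated verbatim as in Theorem \ref{t1}: the last one, after adding and subtracting $b^{\ast}({u'}^{n}_{h},T^{n}_{h},T^{n}_{h})$, using skew-symmetry, Lemma \ref{l1}, the inverse inequality and Young's inequality, contributes $\frac14\|T^{n+1}_{h}-T^{n}_{h}\|^{2}$ plus a term annihilated by the timestep condition (\ref{c1}). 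The genuinely new term is the coupling $\Delta t(u^{n}_{1},T^{n+1}_{h})$. I would bound it by $\Delta t\|u^{n}_{h}\|\,\|T^{n+1}_{h}\|$ (using $\|u^{n}_{1}\|\le\|u^{n}_{h}\|$), then apply Poincar\'e--Friedrichs and Young so that the $\|\nabla T^{n+1}_{h}\|^{2}$ factor is absorbed into the viscous dissipation while the remaining mass term is kept at the \emph{old} time level, producing a contribution of the form $C\Delta t\|u^{n}_{h}\|^{2}$. Choosing the Young parameters for the source and coupling terms so that together they consume $\frac12\kappa\Delta t\|\nabla T^{n+1}_{h}\|^{2}$, adding and subtracting $\frac12\kappa\Delta t\|\nabla T^{n}_{h}\|^{2}$, and using (\ref{c1}) as in Theorem \ref{t1} yields a telescoping inequality whose right-hand side is $\frac{\Delta t}{\kappa}\|g^{n+1}\|^{2}_{-1}+C\Delta t\|u^{n}_{h}\|^{2}$.

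For the velocity, set $v_{h}=u^{n+1}_{h}\in V_{h}$ in (\ref{scheme:two:velocity}), apply polarization, and multiply by $\Delta t$. The fluctuation term $b({u'}^{n}_{h},u^{n}_{h},u^{n+1}_{h})$ and the body force are handled exactly as in Theorem \ref{t1}, the former again leaving $\frac14\|u^{n+1}_{h}-u^{n}_{h}\|^{2}$ and a term removed by (\ref{c1}). The buoyancy term here is the \emph{lagged} $\Delta t\,PrRa(\gamma T^{n}_{h},u^{n+1}_{h})$; Cauchy--Schwarz--Young with Poincar\'e--Friedrichs on $u^{n+1}_{h}$ places $\frac{\Delta t\epsilon}{2}\|\nabla u^{n+1}_{h}\|^{2}$ into the viscous dissipation and leaves $C\Delta t\|T^{n}_{h}\|^{2}$ at the old time level. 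After the usual $\epsilon=Pr/2$ choice and the add/subtract of $\frac12 Pr\Delta t\|\nabla u^{n}_{h}\|^{2}$, I obtain a telescoping velocity inequality with right-hand side $\frac{\Delta t}{Pr}\|f^{n+1}\|^{2}_{-1}+C\Delta t\|T^{n}_{h}\|^{2}$.

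The main obstacle, and the only structural departure from Theorem \ref{t1}, is that the coupling and the lagged buoyancy each feed the \emph{other} variable's energy at the old time level: the temperature inequality carries $C\Delta t\|u^{n}_{h}\|^{2}$ and the velocity inequality carries $C\Delta t\|T^{n}_{h}\|^{2}$. Consequently the temperature cannot be bounded independently (as it was in Theorem \ref{t1}); instead I would add the two per-step inequalities, set $a_{n}:=\|u^{n}_{h}\|^{2}+\|T^{n}_{h}\|^{2}+Pr\Delta t\|\nabla u^{n}_{h}\|^{2}+\kappa\Delta t\|\nabla T^{n}_{h}\|^{2}$, sum over $n=0,\dots,N-1$ so that the mass and gradient terms telescope while the positive time-difference terms accumulate on the left, and bound the coupling contributions by $C\Delta t\,a_{n}$. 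The result matches the hypotheses of the discrete Gronwall Lemma \ref{l4} with $d_{n}\equiv 2C$, and applying it produces the stated bound with the factor $\exp(2Ct^{\ast})$ — the quantitative expression of the possible loss of long-time stability noted in the introduction. Finally, the pressure estimate is obtained verbatim from the argument in Theorem \ref{t1}: isolate the discrete time derivative in (\ref{scheme:two:velocity}), estimate its $V^{\ast}_{h}$ norm via Lemma \ref{l1}, Cauchy--Schwarz and Poincar\'e--Friedrichs, isolate the pressure, and invoke the discrete inf--sup condition (\ref{infsup}); the only change is that $T^{n}_{h}$ replaces $T^{n+1}_{h}$ in the buoyancy contribution, which is already controlled by the stability just established.
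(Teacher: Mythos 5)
Your proposal is correct and follows essentially the same route as the paper: test with $T^{n+1}_{h}$ and $u^{n+1}_{h}$, absorb the fluctuation trilinear terms via skew-symmetry, the inverse inequality and condition (\ref{c1}), bound the coupling term $(u^{n}_{1h},T^{n+1}_{h})$ and the lagged buoyancy term $PrRa(\gamma T^{n}_{h},u^{n+1}_{h})$ by $C\Delta t\|u^{n}_{h}\|^{2}$ and $C\Delta t\|T^{n}_{h}\|^{2}$ respectively, sum, and close with the discrete Gronwall Lemma \ref{l4}; the pressure estimate is verbatim from Theorem \ref{t1} with $T^{n}_{h}$ in place of $T^{n+1}_{h}$. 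The only cosmetic difference is that the paper adds the two scheme equations before testing rather than adding the two per-step inequalities afterward, which changes nothing, and you correctly identify the cross-coupling at the old time levels as the reason Gronwall (and hence the loss of long-time stability) is unavoidable here.
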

\begin{proof}
Add equations (\ref{scheme:two:velocity}) and (\ref{scheme:two:temperature}), let $S_{h} = T^{n+1}_{h} \in W_{h}$ and $v_{h} = u^{n+1}_{h} \in V_{h}$ and use the polarization identity.  Then, 
\begin{align}
&\frac{1}{2 \Delta t} \Big\{\|T^{n+1}_{h}\|^{2} - \|T^{n}_{h}\|^{2} + \|T^{n+1}_{h} - T^{n}_{h}\|^{2}\Big\} + \frac{1}{2 \Delta t} \Big\{\|u^{n+1}_{h}\|^{2} - \|u^{n}_{h}\|^{2} + \|u^{n+1}_{h} - u^{n}_{h}\|^{2}\Big\}
\\ &+ \kappa \|\nabla T^{n+1}_{h}\|^{2} + Pr \|\nabla u^{n+1}_{h}\|^{2} + b({u'}^{n}_{h},u^{n}_{h},u^{n+1}_{h}) + b^{\ast}({u'}^{n}_{h},T^{n}_{h},T^{n+1}_{h}) = PrRa(\gamma T^{n}_{h}, u^{n+1}_{h}) \notag
\\ &+ (u^{n}_{1h},T^{n+1}_{h}) + (f^{n+1},u^{n+1}_{h}) + (g^{n+1},T^{n+1}_{h}). \notag
\end{align}
Apply similar techniques and estimates as in the proof of Theorem \ref{t1},
\begin{align}
&\frac{1}{2} \Big\{\|T^{n+1}_{h}\|^{2} - \|T^{n}_{h}\|^{2} + \frac{1}{2} \|T^{n+1}_{h} - T^{n}_{h}\|^{2}\Big\} + \frac{1}{2} \Big\{\|u^{n+1}_{h}\|^{2} - \|u^{n}_{h}\|^{2} + \frac{1}{2} \|u^{n+1}_{h} - u^{n}_{h}\|^{2}\Big\}
\\ &+ \frac{\kappa \Delta t}{2} \big\{ \|\nabla T^{n+1}_{h}\|^{2} - \|\nabla T^{n}_{h}\|^{2} \big\} + \frac{Pr \Delta t}{2} \big\{\|\nabla u^{n+1}_{h}\|^{2} - \|\nabla u^{n}_{h}\|^{2} \big\} \notag
\\ &+ \frac{\kappa \Delta t}{2} \| \nabla T^{n}_{h} \|^{2} \big\{ 1- \frac{2 \Delta t C_{6}^2 C_{inv,2}}{\kappa h} \| \nabla {u'}^{n}_{h} \|^{2} \big\} + \frac{Pr \Delta t}{2} \| \nabla u^{n}_{h} \|^{2} \big\{ 1- \frac{2 \Delta t C_{5}^2 C_{inv,1}}{Pr h} \| \nabla {u'}^{n}_{h} \|^{2} \big\} \notag
\\ &\leq {\Delta tPr Ra^{2} C_{PF,1}^{2}} \| T^{n}_{h} \|^{2} + \frac{\Delta t C_{PF,2}^{2}}{\kappa} \| u^{n}_{h} \|^{2}+ \frac{\Delta t}{Pr} \| f^{n+1} \|_{-1}^{2} + \frac{\Delta t}{\kappa} \| g^{n+1} \|_{-1}^{2}. \notag
\end{align}
\noindent Using the timestep condition, multiplying by 2, taking a maximum over constants in the first two terms on the r.h.s. and summing from $n = 0$ to $n = N-1$ leads to,
\begin{multline}
\|T^{N}_{h}\|^{2} + \|u^{N}_{h}\|^{2} + \frac{1}{2}\sum_{n = 0}^{N-1}\big(\|T^{n+1}_{h} - T^{n}_{h}\|^{2} + \|u^{n+1}_{h} - u^{n}_{h}\|^{2}\big) + \kappa \Delta t \|\nabla T^{N}_{h}\|^{2} + Pr \Delta t \|\nabla u^{N}_{h}\|^{2}\\
\leq C \Delta t \sum_{n = 0}^{N-1} \big\{ \| T^{n}_{h} \|^{2} +  \| u^{n}_{h} \|^{2} \big\} + 2 \Delta t \sum_{n = 0}^{N-1} \big\{ \frac{1}{Pr} \| f^{n+1} \|_{-1}^{2} + \frac{1}{\kappa} \| g^{n+1} \|_{-1}^{2} \big\} + \|u^{0}_{h} \|^{2} + \| T^{0}_{h} \|^{2} \\
+ Pr \Delta t \|\nabla u^{0}_{h}\|^{2} + \kappa \Delta t \|\nabla T^{0}_{h}\|^{2}.
\end{multline}
Lastly, apply Lemma \ref{l4}.  Then,
\begin{multline}
\|T^{N}_{h}\|^{2} + \|u^{N}_{h}\|^{2} + \frac{1}{2}\sum_{n = 0}^{N-1}\big(\|T^{n+1}_{h} - T^{n}_{h}\|^{2} + \|u^{n+1}_{h} - u^{n}_{h}\|^{2}\big) + \kappa \Delta t \|\nabla T^{N}_{h}\|^{2} + Pr \Delta t \|\nabla u^{N}_{h}\|^{2}\\
\leq exp(C t^{\ast}) \Big\{ 2 \Delta t \sum_{n = 0}^{N-1} ( \frac{1}{Pr} \| f^{n+1} \|_{-1}^{2} + \frac{1}{\kappa} \| g^{n+1} \|_{-1}^{2})  + \|u^{0}_{h} \|^{2} + \| T^{0}_{h} \|^{2} \\
+ Pr \Delta t \|\nabla u^{0}_{h}\|^{2} + \kappa \Delta t \|\nabla T^{0}_{h}\|^{2}\Big\}.
\end{multline}
Thus, numerical approximations of velocity and temperature are stable.  Stability of the pressure approximation follows by similar arguments as in Theorem \ref{t1}.
\end{proof}
\textbf{Remark:}  Theorem \ref{t1} implies long-time stability of the approximate solutions.  Application of Lemma \ref{l4} in Theorem \ref{t2} leads to the loss of long-time stability due to the exponential growth factor, in $t^{\ast}$.
\subsection{Error Analysis}
Denote $u^{n}$, $p^{n}$, and $T^{n}$ as the true solutions at time $t^{n} = n\Delta t$.  Assume the solutions satisfy the following regularity assumptions:
\begin{align} 
u &\in L^{\infty}(0,t^{\ast};X \cap H^{k+1}(\Omega)), \;  T \in L^{\infty}(0,t^{\ast};W \cap H^{k+1}(\Omega)), \notag \\
u_{t}, T_{t} &\in L^{\infty}(0,t^{\ast};H^{k+1}(\Omega)), \; u_{tt}, T_{tt} \in L^{\infty}(0,t^{\ast};L^{2}(\Omega)), \label{error:regularity} \\
p &\in L^{\infty}(0,t^{\ast};Q \cap H^{m}(\Omega)). \notag
\end{align}
\noindent The errors are denoted
\begin{align*}
e^{n}_{u} &= u^{n} - u^{n}_{h}, \; e^{n}_{T} = T^{n} - T^{n}_{h}, \; e^{n}_{p} = p^{n} - p^{n}_{h}.
\end{align*}
\begin{definition} (Consistency error).  The consistency errors are defined as
\begin{align*}
\tau_{u}(u^{n};v_{h}) = \big(\frac{u^{n}-u^{n-1}}{\Delta t} - u^{n}_{t}, v_{h}\big), \; \tau_{T}(T^{n};S_{h}) = \big(\frac{T^{n}-T^{n-1}}{\Delta t} - T^{n}_{t}, S_{h}\big).
\end{align*}
\end{definition}
\begin{lemma}\label{consistency}
Provided  $u$ and $T$ satisfy the regularity assumptions (\ref{error:regularity}), then $\forall r > 0$
\begin{align*}
\lvert \tau_{u}(u^{n};v_{h}) \rvert &\leq \frac{C^{2}_{PF,1} C_{r} \Delta t^{2}}{\epsilon}\| u_{tt}\|^{2}_{L^{\infty}(t^{n-1},t^{n};L^{2}(\Omega))} + \frac{\epsilon}{r} \| \nabla v_{h} \|^{2},
\\ \lvert \tau_{T}(T^{n};S_{h}) \rvert &\leq \frac{C^{2}_{PF,2} C_{r} \Delta t^{2}}{\epsilon}\| T_{tt}\|^{2}_{L^{\infty}(t^{n-1},t^{n};L^{2}(\Omega))} + \frac{\epsilon}{r} \| \nabla S_{h} \|^{2}.
\end{align*}
\end{lemma}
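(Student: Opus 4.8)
The plan is to prove the bound for $\tau_{u}$; the estimate for $\tau_{T}$ then follows verbatim with $u$, $v_{h}$, and $C_{PF,1}$ replaced by $T$, $S_{h}$, and $C_{PF,2}$, since the temperature difference quotient has identical structure. The central observation is that $\frac{u^{n}-u^{n-1}}{\Delta t} - u^{n}_{t}$ is precisely the truncation error of a first-order backward difference, and it can be represented exactly through the integral form of Taylor's theorem, which converts a pointwise quantity into a weighted integral of $u_{tt}$ that the regularity assumptions (\ref{error:regularity}) control.

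First I would expand $u^{n-1}=u(t^{n-1})$ about $t^{n}$ with integral remainder, obtaining the identity
\begin{equation*}
\frac{u^{n} - u^{n-1}}{\Delta t} - u^{n}_{t} = \frac{1}{\Delta t}\int_{t^{n-1}}^{t^{n}} (t^{n-1} - s)\, u_{tt}(s)\, ds,
\end{equation*}
which isolates the entire consistency error into a single integral of $u_{tt}$. Next I would take the $L^{2}(\Omega)$ norm of this residual, using $|t^{n-1}-s|\leq \Delta t$ on the interval of integration, the triangle inequality for integrals, and then bounding the time integral by its $L^{\infty}$ value:
\begin{equation*}
\left\| \frac{u^{n} - u^{n-1}}{\Delta t} - u^{n}_{t} \right\| \leq \int_{t^{n-1}}^{t^{n}} \| u_{tt}(s) \|\, ds \leq \Delta t\, \| u_{tt} \|_{L^{\infty}(t^{n-1},t^{n};L^{2}(\Omega))}.
\end{equation*}

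With this residual bound in hand, I would apply the Cauchy-Schwarz inequality to the inner product defining $\tau_{u}$, followed by the Poincar\'{e}-Friedrichs inequality $\|v_{h}\| \leq C_{PF,1}\|\nabla v_{h}\|$, to reach
\begin{equation*}
|\tau_{u}(u^{n}; v_{h})| \leq C_{PF,1}\, \Delta t\, \| u_{tt} \|_{L^{\infty}(t^{n-1},t^{n};L^{2}(\Omega))}\, \|\nabla v_{h}\|.
\end{equation*}
Finally, Young's inequality in the form $ab \leq \frac{r}{4\epsilon} a^{2} + \frac{\epsilon}{r} b^{2}$, applied with $a = C_{PF,1}\Delta t\, \| u_{tt}\|_{L^{\infty}(t^{n-1},t^{n};L^{2}(\Omega))}$ and $b = \|\nabla v_{h}\|$, splits the product into exactly the two asserted terms, the constant $C_{r}=r/4$ absorbing the free parameter $r$.

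There is no genuine obstacle here: the result is a standard backward-difference truncation estimate. The only points requiring care are the correct sign and weight in the Taylor remainder, and the bookkeeping of the parameters $\epsilon$ and $r$ so that the Young splitting reproduces the stated coefficients $\frac{C_{PF,1}^{2}C_{r}\Delta t^{2}}{\epsilon}$ and $\frac{\epsilon}{r}$ precisely. The argument for $\tau_{T}$ is line-for-line the same, yielding the $C_{PF,2}$ and $T_{tt}$ version.
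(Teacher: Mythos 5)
Your proposal is correct and follows exactly the route the paper indicates (Taylor's theorem with integral remainder for the backward-difference truncation error, then Cauchy--Schwarz, Poincar\'{e}--Friedrichs, and Young's inequality); the paper merely states these ingredients without writing them out, and your version supplies the details consistently with the stated constants.
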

\begin{proof}
These follow from the Cauchy-Schwarz-Young inequality, Poincar\'{e}-Friedrichs inequality, and Taylor's Theorem with integral remainder.
\end{proof}
\begin{theorem} \label{error:thick}
For (u,p,T) satisfying (1) - (5), suppose that $(u^{0}_{h},p^{0}_{h},T^{0}_{h}) \in (X_{h},Q_{h},W_{h})$ are approximations of $(u^{0},p^{0},T^{0})$ to within the accuracy of the interpolant.  Further, suppose that condition (\ref{c1}) holds. Then there exists a constant C such that

\begin{multline*}
\|e^{N}_{T}\|^{2} + \|e^{N}_{u}\|^{2} + \frac{1}{2}\sum_{n = 0}^{N-1}\big(\|e^{n+1}_{T} - e^{n}_{T}\|^{2} + \|e^{n+1}_{u} - e^{n}_{u}\|^{2}\big) + \frac{\kappa \Delta t}{2}\|\nabla e^{N}_{T}\|^{2} + \frac{Pr \Delta t}{2} \|\nabla e^{N}_{u}\|^{2}
\\ \leq  C \Big\{ \Delta t \inf_{v_{h} \in X_{h}} \Big( \vertiii{\nabla (u - v_{h}) }^{2}_{\infty,0} + \vertiii{(u - v_{h})_{t}}^{2}_{\infty,0} \Big) + \Delta t \inf_{S_{h} \in W_{h}} \Big( \vertiii{\nabla (T - S_{h})}^{2}_{\infty,0} + \vertiii{(T - S_{h})_{t}}^{2}_{\infty,0} \Big)
\\ + \Delta t \inf_{q_{h} \in Q_{h}} \vertiii{ p - q_{h} }^{2}_{\infty,0} + \Delta t^{3}  + \Delta t \|\nabla \eta^{0}\|^{2} + \Delta t \|\nabla \zeta^{0}\|^{2} + \|\eta^{0}\|^{2} 
\\ + \|\zeta^{0}\|^{2} + \| e^{0}_{T} \|^{2} + \| e^{0}_{u} \|^{2} + \Delta t \| \nabla e^{0}_{T} \|^{2} + \Delta t\| \nabla e^{0}_{u} \|^{2}\Big\}.
\end{multline*}

\end{theorem}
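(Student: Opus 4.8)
The plan is to run the standard energy-type finite element error argument, reusing the inverse-inequality and timestep-condition mechanism already developed in Theorem \ref{t1} to control the explicitly treated fluctuation terms, and to close the coupled velocity--temperature estimate with the discrete Gronwall inequality (Lemma \ref{l4}). First I would derive the error equations: evaluating the weak form of (\ref{s1})--(\ref{s1f}) at $t^{n+1}$, testing against $v_{h} \in V_{h}$, $S_{h} \in W_{h}$, and subtracting the schemes (\ref{scheme:one:velocity}) and (\ref{scheme:one:temperature}) gives equations for $e^{n+1}_{u}$, $e^{n+1}_{T}$ in which the discrete time differences are matched against $u^{n+1}_{t}$, $T^{n+1}_{t}$ up to the consistency errors of Lemma \ref{consistency}. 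I would then pick interpolants $U^{n} \in V_{h}$ (legitimate by Lemma \ref{l5}, since $u^{n} \in V$) and $\mathcal{T}^{n} \in W_{h}$ and split $e^{n}_{u} = \eta^{n} - \phi^{n}_{h}$, $e^{n}_{T} = \zeta^{n} - \psi^{n}_{h}$ with $\eta^{n} = u^{n} - U^{n}$, $\zeta^{n} = T^{n} - \mathcal{T}^{n}$ governed by (\ref{a1})--(\ref{a3}) and $\phi^{n}_{h} \in V_{h}$, $\psi^{n}_{h} \in W_{h}$ the finite element remainders. Testing with $\phi^{n+1}_{h} \in V_{h}$ annihilates $(p^{n+1}_{h}, \nabla \cdot \phi^{n+1}_{h})$ and leaves the true pressure as $(p^{n+1}-q_{h}, \nabla \cdot \phi^{n+1}_{h})$ for arbitrary $q_{h} \in Q_{h}$, which supplies the $\inf_{q_{h}}$ term.

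After applying the polarization identity to the difference quotients exactly as in (\ref{stability:thick}) and (\ref{stability:thicku}), I would dispatch the routine terms: the interpolation quantities $\eta^{n}, \zeta^{n}$ and their discrete time derivatives by Cauchy--Schwarz--Young with (\ref{a1})--(\ref{a3}), the consistency errors by Lemma \ref{consistency} (contributing the higher power of $\Delta t$), and the buoyancy coupling $PrRa(\gamma e^{n+1}_{T}, \phi^{n+1}_{h})$ by Young's inequality, which leaves a $\|\nabla \phi^{n+1}_{h}\|^{2}$ piece to be absorbed by the viscous term and a $\|e^{n+1}_{T}\|^{2}$ piece reserved for Gronwall.

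The hard part will be the nonlinear terms. The organizing step is to recombine the discrete advection, using linearity and ${u}^{n}_{h} = <u_{h}>^{n} + {u'}^{n}_{h}$, as $b(u^{n}_{h}, u^{n+1}_{h}, \phi^{n+1}_{h}) - b({u'}^{n}_{h}, u^{n+1}_{h}-u^{n}_{h}, \phi^{n+1}_{h})$. Comparing the semi-implicit form $b(u^{n}_{h}, u^{n+1}_{h}, \cdot)$ with the continuous $b(u^{n+1}, u^{n+1}, \cdot)$ and writing $u^{n+1}-u^{n}_{h} = (u^{n+1}-u^{n}) + e^{n}_{u}$, $u^{n+1}-u^{n+1}_{h} = e^{n+1}_{u}$, the difference splits into a time-lag term $b(u^{n+1}-u^{n}, u^{n+1}, \cdot)$ of size $O(\Delta t)$, a genuine error $b(e^{n}_{u}, u^{n+1}, \cdot)$, and $b(u^{n}_{h}, e^{n+1}_{u}, \phi^{n+1}_{h})$ whose dangerous $\phi^{n+1}_{h}$ part vanishes by skew-symmetry, leaving the interpolation term $b(u^{n}_{h}, \eta^{n+1}, \phi^{n+1}_{h})$. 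The fluctuation correction, after substituting $u^{n+1}_{h}-u^{n}_{h} = (u^{n+1}-u^{n}) - (e^{n+1}_{u}-e^{n}_{u})$ and using skew-symmetry, reduces to precisely the destabilizing term of (\ref{stability:thick:estb}) with $\phi^{n}_{h}$ in place of $u^{n}_{h}$, and is absorbed under condition (\ref{c1}) via the $C_{5}$ bound of Lemma \ref{l1} and the inverse inequality. The temperature advection $b^{\ast}$ is handled identically using the $C_{6}$ bound, its error split contributing further $\|e_{u}\|^{2}$ and $\|e_{T}\|^{2}$ terms. Throughout, the factors $\|\nabla u^{n}_{h}\|$, $\|\nabla T^{n}_{h}\|$, and $\|\nabla {u'}^{n}_{h}\|$ multiplying error quantities are controlled a priori by the stability result of Theorem \ref{t1}; the genuinely delicate bookkeeping is budgeting the finite viscous and thermal dissipation so that, once the fluctuation terms consume their share under (\ref{c1}), enough remains to absorb the gradient contributions of the error terms, leaving only $L^{2}$-type errors $\|e_{u}\|^{2}, \|e_{T}\|^{2}$ behind.

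Finally, I would add the velocity and temperature inequalities, invoke (\ref{c1}) to discard the nonnegative $\|\nabla u^{n}_{h}\|^{2}$ and $\|\nabla T^{n}_{h}\|^{2}$ coefficients as in Theorem \ref{t1}, sum over $n = 0, \dots, N-1$, and apply the discrete Gronwall inequality (Lemma \ref{l4}) to absorb the accumulated $\Delta t \sum (\|e^{n}_{u}\|^{2} + \|e^{n}_{T}\|^{2})$ arising from the buoyancy and advection couplings, folding the resulting growth constant into $C$. A concluding triangle inequality converts the bounds on $\phi^{N}_{h}, \psi^{N}_{h}$ back to $e^{N}_{u}, e^{N}_{T}$, with $\eta, \zeta$ and their initial values supplying the remaining interpolation and initial-data terms, which yields the stated estimate.
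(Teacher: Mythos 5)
Your proposal follows essentially the same route as the paper: split the errors via interpolants into $\eta - \phi_{h}$ and $\zeta - \psi_{h}$, test with $\phi^{n+1}_{h} \in V_{h}$ and $\psi^{n+1}_{h}$, decompose the nonlinear differences by adding and subtracting cross terms so that the dangerous pieces vanish by skew-symmetry, absorb the lagged fluctuation terms $b({u'}^{n}_{h},\phi^{n}_{h},\phi^{n+1}_{h})$ and $b^{\ast}({u'}^{n}_{h},\psi^{n}_{h},\psi^{n+1}_{h})$ via the inverse inequality and condition (\ref{c1}) exactly as in the stability proof, and close with Lemma \ref{consistency}, the discrete Gronwall inequality, and the triangle inequality. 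The only (immaterial) difference is which slot of the trilinear cross terms carries the continuous versus the discrete solution; this is the paper's argument in all essentials.
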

\begin{proof}
The true solutions satisfy for all $n = 0, 1, ... N$:
\begin{align}
(\frac{u^{n+1} - u^{n}}{\Delta t},v_{h}) + b(u^{n+1},u^{n+1},v_{h}) + Pr(\nabla u^{n+1},\nabla v_{h}) - (p^{n+1}, \nabla \cdot v_{h}) =  PrRa(\gamma T^{n+1},v_{h})
\label{error:one:truevelocity} \\ + (f^{n+1},v_{h}) + \tau_{u}(u^{n+1};v_{h}) \; \; \forall v_{h} \in X_{h} \notag,
\\ (q_{h}, \nabla \cdot u^{n+1}) = 0 \; \; \forall q_{h} \in Q_{h},
\\ (\frac{T^{n+1} - T^{n}}{\Delta t},S_{h}) + b^{\ast}(u^{n+1},T^{n+1},S_{h}) + \kappa (\nabla T^{n+1},\nabla S_{h}) = (g^{n+1},S_{h}) + \tau_{T}(T^{n+1};S_{h}) \; \; \forall S_{h} \in W_{h} \label{error:one:truetemp}.
\end{align}
Subtract (\ref{error:one:truetemp}) and (\ref{scheme:one:temperature}), then the error equation for temperature is
\begin{align}
(\frac{e^{n+1}_{T} - e^{n}_{T}}{\Delta t},S_{h}) + b^{\ast}(u^{n+1},T^{n+1},S_{h}) - b^{\ast}(u^{n}_{h}-{u'}^{n}_{h},T^{n+1}_{h},S_{h}) - b^{\ast}({u'}^{n}_{h},T^{n}_{h},S_{h})
\\ + \kappa (\nabla e^{n+1}_{T},\nabla S_{h}) = \tau_{T}(T^{n+1},S_{h}) \; \; \forall S_{h} \in W_{h}. \notag
\end{align}
Letting $e^{n}_{T} = (T^{n} - \tilde{T}^{n}) - (T^{n}_{h}- \tilde{T}^{n}) = \zeta^{n} - \psi^{n}_{h}$  and rearranging give,
\begin{multline*}
(\frac{\psi^{n+1}_{h} - \psi^{n}_{h}}{\Delta t},S_{h}) + \kappa (\nabla \psi^{n+1}_{h},\nabla S_{h}) = (\frac{\zeta^{n+1} - \zeta^{n}}{\Delta t},S_{h}) + \kappa (\nabla \zeta^{n+1},\nabla S_{h}) - \tau_{T}(T^{n+1},S_{h})
\\ + b^{\ast}(u^{n+1},T^{n+1},S_{h}) - b^{\ast}(u^{n}_{h}-{u'}^{n}_{h},T^{n+1}_{h},S_{h}) - b^{\ast}({u'}^{n}_{h},T^{n}_{h},S_{h})  \; \; \forall S_{h} \in W_{h}.
\end{multline*}
Set $S_{h} = \psi^{n+1}_{h} \in W_{h}.$  This yields
\begin{multline}
\frac{1}{2 \Delta t} \Big\{\|\psi^{n+1}_{h}\|^{2} - \|\psi^{n}_{h}\|^{2} + \|\psi^{n+1}_{h} - \psi^{n}_{h}\|^{2}\Big\} + \kappa \|\nabla \psi^{n+1}_{h}\|^{2} = \frac{1}{\Delta t}(\zeta^{n+1}-\zeta^{n},\psi^{n+1}_{h}) + \kappa (\nabla \zeta^{n+1},\nabla \psi^{n+1}_{h}) 
\\ - \tau_{T}(T^{n+1},\psi^{n+1}_{h}) + b^{\ast}(u^{n+1},T^{n+1},\psi^{n+1}_{h}) - b^{\ast}(u^{n}_{h}-{u'}^{n}_{h},T^{n+1}_{h},\psi^{n+1}_{h}) - b^{\ast}({u'}^{n}_{h},T^{n}_{h},\psi^{n+1}_{h}).
\end{multline}
Add and subtract $b^{\ast}(u^{n+1},T^{n+1}_{h},\psi^{n+1}_{h})$, $b^{\ast}(u^{n},T^{n+1}_{h},\psi^{n+1}_{h})$, and $b^{\ast}({u'}^{n}_{h},T^{n+1}-T^{n},\psi^{n+1}_{h})$.  Then,
\begin{align}\label{fet1}
&\frac{1}{2 \Delta t} \Big\{\|\psi^{n+1}_{h}\|^{2} - \|\psi^{n}_{h}\|^{2} + \|\psi^{n+1}_{h} - \psi^{n}_{h}\|^{2}\Big\} + \kappa \|\nabla \psi^{n+1}_{h}\|^{2} = \frac{1}{\Delta t}(\zeta^{n+1}-\zeta^{n},\psi^{n+1}_{h}) + \kappa (\nabla \zeta^{n+1},\nabla \psi^{n+1}_{h})
\\ &+ b^{\ast}(u^{n+1},\zeta^{n+1},\psi^{n+1}_{h}) + b^{\ast}(u^{n+1}-u^{n},T^{n+1}_{h},\psi^{n+1}_{h}) + b^{\ast}(\eta^{n},T^{n+1}_{h},\psi^{n+1}_{h}) \notag
\\ &- b^{\ast}(\phi^{n}_{h},T^{n+1}_{h},\psi^{n+1}_{h}) + b^{\ast}({u'}^{n}_{h},\zeta^{n+1},\psi^{n+1}_{h}) - b^{\ast}({u'}^{n}_{h},\zeta^{n},\psi^{n+1}_{h}) + b^{\ast}({u'}^{n}_{h},\psi^{n}_{h},\psi^{n+1}_{h}) \notag
\\ &+ b^{\ast}({u'}^{n}_{h},T^{n+1}-T^{n},\psi^{n+1}_{h}) - \tau_{T}(T^{n+1},\psi^{n+1}_{h}). \notag
\end{align}
Follow analogously for the velocity error equation.  Subtract (\ref{error:one:truevelocity}) and (\ref{scheme:one:velocity}), split the error into $e^{n}_{u} = (u^{n} - \tilde{u}^{n}) - (u^{n}_{h}- \tilde{u}^{n}) = \eta^{n} - \phi^{n}_{h}$, let $v_{h} = \phi^{n+1}_{h} \in V_{h}$, add and subtract $b(u^{n+1},u^{n+1}_{h},\phi^{n+1}_{h})$, $b(u^{n},u^{n+1}_{h},\phi^{n+1}_{h})$, and $b({u'}^{n}_{h},u^{n+1}-u^{n},\phi^{n+1}_{h})$.  Then,
\begin{align}\label{feu1}
&\frac{1}{2 \Delta t} \Big\{\|\phi^{n+1}_{h}\|^{2} - \|\phi^{n}_{h}\|^{2} + \|\phi^{n+1}_{h} - \phi^{n}_{h}\|^{2}\Big\} + Pr \|\nabla \phi^{n+1}_{h}\|^{2} = \frac{1}{\Delta t}(\eta^{n+1}-\eta^{n},\phi^{n+1}_{h})
\\ &+ Pr (\nabla \eta^{n+1},\nabla \phi^{n+1}_{h}) - (p^{n+1} - q^{n+1}_{h},\nabla \cdot \phi^{n+1}_{h}) + PrRa(\gamma \zeta^{n+1},\phi^{n+1}_{h}) \notag
\\ &- PrRa(\gamma \psi^{n+1}_{h},\phi^{n+1}_{h})+ b(u^{n+1},\eta^{n+1},\phi^{n+1}_{h}) + b(u^{n+1}-u^{n},u^{n+1}_{h},\phi^{n+1}_{h}) + b(\eta^{n},u^{n+1}_{h},\phi^{n+1}_{h}) \notag
\\ &- b(\phi^{n}_{h},u^{n+1}_{h},\phi^{n+1}_{h}) + b({u'}^{n}_{h},\eta^{n+1},\phi^{n+1}_{h}) - b({u'}^{n}_{h},\eta^{n},\phi^{n+1}_{h}) + b({u'}^{n}_{h},\phi^{n}_{h},\phi^{n+1}_{h}) \notag
\\ &+ b({u'}^{n}_{h},u^{n+1}-u^{n},\phi^{n+1}_{h}) - \tau_{u}(u^{n+1},\phi^{n+1}_{h}). \notag
\end{align}
Our goal now is to estimate all terms on the r.h.s. in such a way that we may hide the terms involving unknown pieces $\psi^{k}_{h}$ into the l.h.s.  The following estimates are formed using Lemma \ref{l1} in conjunction with the Cauchy-Schwarz-Young inequality,
\begin{align}
\lvert b^{\ast}(u^{n+1},\zeta^{n+1},\psi^{n+1}_{h}) \rvert &\leq C_{3} \| \nabla u^{n+1} \| \| \nabla \zeta^{n+1} \| \| \nabla \psi^{n+1}_{h} \| \leq \frac{C_{r} C_{3}^{2}}{\epsilon_3} \| \nabla u^{n+1} \|^{2} \| \nabla \zeta^{n+1} \|^{2} + \frac{\epsilon_3}{r} \| \nabla \psi^{n+1}_{h} \|^{2},
\\ \lvert b^{\ast}(\eta^{n},T^{n+1}_{h},\psi^{n+1}_{h}) \rvert &\leq C_{3} \| \nabla \eta^{n} \| \| \nabla T^{n+1}_{h} \| \| \nabla \psi^{n+1}_{h} \| \leq \frac{C_{r} C_{3}^{2}}{\epsilon_5} \| \nabla \eta^{n} \|^{2} \| \nabla T^{n+1}_{h} \|^{2} + \frac{\epsilon_5}{r} \| \nabla \psi^{n+1}_{h} \|^{2},
\\ \lvert b^{\ast}({u'}^{n}_{h},\zeta^{n+1},\psi^{n+1}_{h}) \rvert &\leq C_{3} \| {u'}^{n}_{h} \| \| \zeta^{n+1} \| \| \psi^{n+1}_{h} \| \leq \frac{C_{r} C_{3}^{2}}{\epsilon_7} \| \nabla {u'}^{n}_{h} \|^{2} \| \nabla \zeta^{n+1} \|^{2} + \frac{\epsilon_7}{r} \| \nabla \psi^{n+1}_{h} \|^{2},
\\ \lvert -b^{\ast}({u'}^{n}_{h},\zeta^{n},\psi^{n+1}_{h}) \rvert &\leq C_{3}\| \nabla {u'}^{n}_{h} \| \| \nabla \zeta^{n} \| \| \nabla \psi^{n+1}_{h} \| \leq \frac{C_{r} C_{3}^{2}}{\epsilon_8} \| \nabla {u'}^{n}_{h} \|^{2} \| \nabla \zeta^{n} \|^{2} + \frac{\epsilon_8}{r} \| \nabla \psi^{n+1}_{h} \|^{2}.
\end{align}
Applying Lemma \ref{l1}, the Cauchy-Schwarz-Young inequality, and Taylor's theorem yields,
\begin{align}
\lvert b^{\ast}(u^{n+1}-u^{n},T^{n+1}_{h},\psi^{n+1}_{h}) \rvert &\leq C_{3} \| \nabla (u^{n+1}-u^{n}) \| \| \nabla T^{n+1}_{h} \| \| \nabla \psi^{n+1}_{h} \|
\\ &\leq \frac{C_{r} C_{3}^{2}}{\epsilon_4} \| \nabla (u^{n+1}-u^{n}) \|^{2} \| \nabla T^{n+1}_{h} \|^{2} + \frac{\epsilon_4}{r} \| \nabla \psi^{n+1}_{h} \|^{2} \notag
\\ &\leq \frac{C_{r} C_{3}^{2} \Delta t^{2}}{\epsilon_4} \| \nabla T^{n+1}_{h} \|^{2} \| \nabla u_{t} \|^{2}_{L^{\infty}(t^n,t^{n+1};L^{2}(\Omega))} + \frac{\epsilon_4}{r} \| \nabla \psi^{n+1}_{h} \|^{2}, \notag
\\ \lvert b^{\ast}(u^{n}_{h},T^{n+1}-T^{n},\psi^{n+1}_{h}) \rvert &\leq C_{3} \| \nabla u^{n}_{h} \| \| \nabla (T^{n+1} - T^{n}) \| \| \nabla \psi^{n+1}_{h} \|
\\ &\leq \frac{C_{r} C_{3}^{2}\Delta t^{2}}{\epsilon_{10}} \| \nabla u^{n}_{h} \|^{2} \| \nabla T_{t} \|^{2}_{L^{\infty}(t^n,t^{n+1};L^{2}(\Omega))} + \frac{\epsilon_{10}}{r} \| \nabla \psi^{n+1}_{h}\|^{2}. \notag
\end{align}
Apply Lemma \ref{l1} and the Cauchy-Schwarz-Young inequality twice.  This yields
\begin{align}
\lvert -b^{\ast}(\phi^{n}_{h},T^{n+1}_{h},\psi^{n+1}_{h}) \rvert &\leq C_{4} \sqrt{\| \phi^{n}_{h} \| \| \nabla \phi^{n}_{h} \|} \| \nabla T^{n+1}_{h} \| \| \nabla \psi^{n+1}_{h} \| \leq C_{4} C_{T}(j) \sqrt{\| \phi^{n}_{h} \| \| \nabla \phi^{n}_{h} \|} \| \nabla \psi^{n+1}_{h} \| 
\\ &\leq \frac{C_{4}C_{T}\epsilon_6}{2} \| \nabla \psi^{n+1}_{h} \|^{2} + \frac{C_{4}C_{T}\delta_6}{4 \epsilon_6} \| \nabla \phi^{n}_{h} \|^{2} + \frac{C_{4}C_{T}}{4 \epsilon_6 \delta_6} \| \phi^{n}_{h} \|^{2}. \notag
\end{align}
Use Lemma \ref{l1}, the inverse inequality, and the Cauchy-Schwarz-Young inequality yielding
\begin{align}
\lvert \Delta t b^{\ast}({u'}^{n}_{h},\psi^{n}_{h},\psi^{n+1}_{h}) \rvert &= \lvert \Delta t b^{\ast}({u'}^{n}_{h},\psi^{n}_{h},\psi^{n+1}_{h}-\psi^{n}_{h}) \rvert
\\ &\leq \Delta t C_{6} \| \nabla {u'}^{n}_{h} \| \| \nabla \psi^{n}_{h} \| \sqrt{\| \psi^{n+1}_{h}-\psi^{n}_{h} \| \| \nabla (\psi^{n+1}_{h}-\psi^{n}_{h}) \|} \notag
\\ &\leq \frac{\Delta t C_{6} C^{1/2}_{inv,2}}{h^{1/2}} \| \nabla {u'}^{n}_{h} \| \| \nabla \psi^{n}_{h} \| \| \psi^{n+1}_{h}-\psi^{n}_{h} \| \notag
\\ &\leq \frac{C_{6}^{2} C_{inv,2} \Delta t}{2 h \epsilon_9} \| \nabla {u'}^{n}_{h} \|^{2} \| \nabla \psi^{n}_{h} \|^{2} + \frac{\epsilon_9}{2} \| \psi^{n+1}_{h} - \psi^{n}_{h} \|^{2}. \notag
\end{align}
The Cauchy-Schwarz-Young inequality, Poincar\'{e}-Friedrichs inequality and Taylor's theorem yield
\begin{align}
\lvert \frac{1}{\Delta t} (\zeta^{n+1} - \zeta^{n}, \psi^{n+1}_{h}) \rvert \leq \frac{C^{2}_{PF,2} C_{r}}{\epsilon_1} \| \zeta_{t} \|^{2}_{L^{\infty}(t^n,t^{n+1};L^{2}(\Omega))} + \frac{\epsilon_1}{r} \| \nabla \psi^{n+1}_{h} \|^{2}.
\end{align}
Lastly, use the Cauchy-Schwarz-Young inequality,
\begin{align}
\lvert \kappa (\nabla \zeta^{n+1},\nabla \psi^{n+1}_{h}) \rvert \leq \frac{C_{r}\kappa^{2}}{\epsilon_2} \| \nabla \zeta^{n+1} \|^{2} + \frac{\epsilon_2}{r} \| \nabla \psi^{n+1}_{h} \|^{2}.
\end{align}
Similar estimates follow for the r.h.s. terms in (\ref{feu1}), however, we must treat an additional pressure term and error term,
\begin{align}
\lvert -(p^{n+1}-q^{n+1}_{h},\nabla \cdot \phi^{n+1}_{h}) \rvert &\leq \sqrt{d} \| p^{n+1}-q^{n+1}_{h} \| \| \nabla \phi^{n+1}_{h} \| \leq \frac{d C_{r}}{\epsilon_{14}} \| p^{n+1}-q^{n+1}_{h} \|^{2} + \frac{\epsilon_{14}}{r}\| \nabla \phi^{n+1}_{h} \|^{2},
\\ \lvert PrRa(\gamma \zeta^{n+1},\phi^{n+1}_{h})\rvert &\leq \frac{Pr^{2} Ra^{2} C^{2}_{PF,1} C^{2}_{PF,2} C_{r}}{\epsilon_{15}} \| \nabla \zeta^{n+1} \|^{2} + \frac{\epsilon_{15}}{r} \| \nabla \phi^{n+1}_{h} \|^{2},
\\ \lvert -PrRa(\gamma \psi^{n+1}_{h},\phi^{n+1}_{h})\rvert &\leq \frac{Pr^{2} Ra^{2} C^{2}_{PF,1} C^{2}_{PF,2} C_{r}}{\epsilon_{16}} \| \nabla \psi^{n+1}_{h} \|^{2} + \frac{\epsilon_{16}}{r} \| \nabla \phi^{n+1}_{h} \|^{2}.
\end{align}
Applying the estimates and Lemma \ref{consistency} into the temperature and velocity error equations (\ref{fet1}), (\ref{feu1}) and multiplying by $\Delta t$:
\begin{align}\label{error:thick:paramT}
&\frac{1}{2} \Big\{\|\psi^{n+1}_{h}\|^{2} - \|\psi^{n}_{h}\|^{2} + \|\psi^{n+1}_{h} - \psi^{n}_{h}\|^{2}\Big\} + \kappa \Delta t \|\nabla \psi^{n+1}_{h}\|^{2}
\\ &\leq \frac{\Delta t C_{r}C^{2}_{PF,2}}{\epsilon_1}\| \zeta_{t} \|^{2}_{L^{\infty}(t^n,t^{n+1};L^{2}(\Omega))} + \frac{\Delta t \epsilon_1}{r} \| \nabla \psi^{n+1}_{h} \|^{2} + \frac{C_{r} \kappa^{2} \Delta t}{\epsilon_2}\| \nabla \zeta^{n+1} \|^{2} + \frac{\Delta t \epsilon_2}{r} \| \nabla \psi^{n+1}_{h} \|^{2} \notag
\\ &+ \frac{C_{3}^{2} C_{r} \Delta t}{\epsilon_3}\| \nabla u^{n+1} \|^{2} \| \nabla \zeta^{n+1} \|^{2} + \frac{\Delta t \epsilon_3}{r} \| \nabla \psi^{n+1}_{h} \|^{2} + \frac{C_{r} C_{3}^{2} \Delta t^{3}}{\epsilon_4}\| \nabla T^{n+1}_{h} \|^{2} \| \nabla u_{t} \|^{2}_{L^{\infty}(t^n,t^{n+1};L^{2}(\Omega))} \notag
\\ &+ \frac{\Delta t \epsilon_4}{r} \| \nabla \psi^{n+1}_{h} \|^{2} + \frac{C_{r}C_{3}^{2}\Delta t}{\epsilon_5}\| \nabla \eta^{n} \|^{2} \| \nabla T^{n+1}_{h} \|^{2} + \frac{\Delta t \epsilon_5}{r} \| \nabla \psi^{n+1}_{h} \|^{2} + \frac{C_{4} C_{T} \Delta t \epsilon_6}{2}\| \nabla \psi^{n+1}_{h} \|^{2} \notag
\\ &+ \frac{C_{4}C_{T}\Delta t \delta_6}{4 \epsilon_6}\| \nabla \phi^{n}_{h} \|^{2} + \frac{C_{4} C_{T} \Delta t}{4\epsilon_6 \delta_6} \| \phi^{n}_{h} \|^{2} + \frac{C_{r} C_{3}^{2} \Delta t}{\epsilon_7}\| \nabla {u'}^{n}_{h} \|^{2} \| \nabla \zeta^{n+1} \|^{2} + \frac{\Delta t \epsilon_7}{r} \| \nabla \psi^{n+1}_{h} \|^{2} \notag
\\ &+ \frac{C_{r} C_{3}^{2} \Delta t}{\epsilon_8} \| \nabla {u'}^{n}_{h} \|^{2} \| \nabla \zeta^{n} \|^{2}  + \frac{\Delta t \epsilon_8}{r} \| \nabla \psi^{n+1}_{h} \|^{2} + \frac{C_{6}^{2} C_{inv,2} \Delta t^{2}}{h \epsilon_9} \| \nabla {u'}^{n}_{h} \|^{2} \| \nabla \psi^{n+1}_{h} \|^{2} + \frac{\epsilon_9}{2} \| \psi^{n+1}_{h} - \psi^{n}_{h} \|^{2} \notag
\\ &+ \frac{C_{r}C_{3}^{2} \Delta t^{3}}{\epsilon_{10}}\| \nabla {u'}^{n}_{h} \|^{2} \| \nabla T_{t} \|^{2}_{L^{\infty}(t^n,t^{n+1};L^{2}(\Omega))} + \frac{\Delta t \epsilon_{10}}{r} \| \nabla \psi^{n+1}_{h} \|^{2} \notag
\\ &+ \frac{C^{2}_{PF,2} C_{r} \Delta t^{3}}{\epsilon_{11}} \| T_{tt}\|^{2}_{L^{\infty}(t^n,t^{n+1};L^{2}(\Omega))} + \frac{\epsilon_{11}}{r} \| \nabla \psi^{n+1}_{h} \|^{2}, \notag
\end{align}
\noindent and
\begin{align}\label{error:thick:paramU}
&\frac{1}{2} \Big\{\|\phi^{n+1}_{h}\|^{2} - \|\phi^{n}_{h}\|^{2} + \|\phi^{n+1}_{h} - \phi^{n}_{h}\|^{2}\Big\} + Pr \Delta t \|\nabla \phi^{n+1}_{h}\|^{2}
\\ &\leq \frac{\Delta t C_{r}C^{2}_{PF,1}}{\epsilon_{12}} \| \eta_{t} \|^{2}_{L^{\infty}(t^n,t^{n+1};L^{2}(\Omega))} + \frac{\Delta t \epsilon_{12}}{r} \| \nabla \phi^{n+1}_{h} \|^{2} + \frac{C_{r} Pr^{2} \Delta t}{\epsilon_{13}}\| \nabla \eta^{n+1} \|^{2} \notag
\\ &+ \frac{\Delta t \epsilon_{13}}{r} \| \nabla \phi^{n+1}_{h} \|^{2} + \frac{dC_{r}\Delta t}{\epsilon_{14}}\| p^{n+1} - q^{n+1}_{h} \|^{2} + \frac{\Delta \epsilon_{14}}{r} \| \nabla \phi^{n+1}_{h} \|^{2} \notag
\\ &+ \Delta t Pr^{2} Ra^{2} C^{2}_{PF,1} C^{2}_{PF,2} C_{r} \big( \frac{1}{\epsilon_{15}}\| \nabla \zeta^{n+1} \|^{2} + \frac{1}{\epsilon_{16}}\| \nabla \psi^{n+1}_{h} \|^{2} \big) + \frac{\Delta t}{r} \big( \frac{1}{\epsilon_{15}} \| \nabla \phi^{n+1}_{h} \|^{2} + \frac{1}{\epsilon_{16}} \| \nabla \phi^{n+1}_{h} \|^{2} \big) \notag
\\ &+ \frac{C_{1} C_{r} \Delta t}{\epsilon_{17}}\| \nabla u^{n+1} \|^{2} \| \nabla \eta^{n+1} \|^{2} + \frac{\Delta t \epsilon_{17}}{r} \| \nabla \phi^{n+1}_{h} \|^{2} \notag
\\ &+ \frac{C_{r}C_{1}^{2} \Delta t^{3}}{\epsilon_{18}} \| \nabla u^{n+1}_{h} \|^{2} \| \nabla u_{t} \|^{2}_{L^{\infty}(t^n,t^{n+1};L^{2}(\Omega))} + \frac{\Delta t \epsilon_{18}}{r} \| \nabla \phi^{n+1}_{h} \|^{2} + \frac{C_{r}C_{1}\Delta t}{\epsilon_{19}}\| \nabla \eta^{n} \|^{2} \| \nabla u^{n+1}_{h} \|^{2} \notag
\\ &+ \frac{\Delta t \epsilon_{19}}{r} \| \nabla \phi^{n+1}_{h} \|^{2} + \frac{C_{2} C_{u} \Delta t \epsilon_{20}}{2}\| \nabla \phi^{n+1}_{h} \|^{2} + \frac{C_{2}C_{u}\Delta t \delta_{20}}{4 \epsilon_{20}}\| \nabla \phi^{n}_{h} \|^{2} + \frac{C_{2} C_{u} \Delta t}{4\epsilon_{20} \delta_{20}} \| \phi^{n}_{h} \|^{2} \notag
\\ &+ \frac{C_{r} C_{1}^{2} \Delta t}{\epsilon_{21}}\| \nabla {u'}^{n}_{h} \|^{2} \| \nabla \eta^{n+1} \|^{2} + \frac{\Delta t \epsilon_{21}}{r} \| \nabla \phi^{n+1}_{h} \|^{2} + \frac{C_{r} C_{1} \Delta t}{\epsilon_{22}} \| \nabla {u'}^{n}_{h} \|^{2} \| \nabla \eta^{n} \|^{2} + \frac{\Delta t \epsilon_{22}}{r} \| \nabla \phi^{n+1}_{h} \|^{2} \notag
\\ &+ \frac{C_{5}^{2} C_{inv,1} \Delta t^{2}}{h \epsilon_{23}} \| \nabla {u'}^{n}_{h} \|^{2} \| \nabla \phi^{n+1}_{h} \|^{2} + \frac{\epsilon_{23}}{2} \| \phi^{n+1}_{h} - \phi^{n}_{h} \|^{2} + \frac{C_{r}C_{1} \Delta t^{3}}{\epsilon_{24}}\| \nabla {u'}^{n}_{h} \|^{2} \| \nabla u_{t} \|^{2}_{L^{\infty}(t^n,t^{n+1};L^{2}(\Omega))} \notag
\\ &+ \frac{\Delta t \epsilon_{24}}{r} \| \nabla \phi^{n+1}_{h} \|^{2} + \frac{C^{2}_{PF,1}C_{r} \Delta t^{3}}{\epsilon_{26}} \| u_{tt} \|^{2}_{L^{\infty}(t^n,t^{n+1};L^{2}(\Omega))} + \frac{\Delta \epsilon_{26}}{r} \| \nabla \phi^{n+1}_{h} \|^{2}. \notag
\end{align}
Combine (\ref{error:thick:paramT}) and (\ref{error:thick:paramU}), choose free parameters appropriately, use condition (\ref{c1}), and take the maximum over all constants on the r.h.s.  Then,
\begin{align} 
&\frac{1}{2} \big( \|\psi^{n+1}_{h}\|^{2} - \|\psi^{n}_{h}\|^{2} \big) + \frac{1}{4}\|\psi^{n+1}_{h} - \psi^{n}_{h}\|^{2} + \frac{\kappa \Delta t}{4} \big(\|\nabla \psi^{n+1}_{h}\|^{2} - \|\nabla \psi^{n}_{h}\|^{2}\big)
\\ &+ \frac{1}{2} \big( \|\phi^{n+1}_{h}\|^{2} - \|\phi^{n}_{h}\|^{2} \big) + \frac{1}{4} \|\phi^{n+1}_{h} - \phi^{n}_{h}\|^{2} + \frac{Pr \Delta t}{4} \big(\|\nabla \phi^{n+1}_{h}\|^{2} - \|\nabla \phi^{n}_{h}\|^{2} \big) \notag
\\ &\leq C\Big\{\Delta t \| \zeta_{t} \|^{2}_{L^{\infty}(t^n,t^{n+1};L^{2}(\Omega))} + \Delta t \| \nabla \zeta^{n+1} \|^{2} + \Delta t \| \nabla u^{n+1} \|^{2} \| \nabla \zeta^{n+1} \|^{2} \notag
\\ &+ \Delta t^{3} \| \nabla T^{n+1}_{h} \|^{2} \| \nabla u_{t} \|^{2}_{L^{\infty}(t^n,t^{n+1};L^{2}(\Omega))} + \Delta t \| \nabla \eta^{n} \|^{2} \| \nabla T^{n+1}_{h} \|^{2} + \Delta t \| \phi^{n}_{h} \|^{2} + \Delta t \| \nabla {u'}^{n}_{h} \|^{2} \| \nabla \zeta^{n+1} \|^{2} \notag
\\ &+ \Delta t \| \nabla {u'}^{n}_{h} \|^{2} \| \nabla \zeta^{n} \|^{2} + \Delta t^{3} \| \nabla {u'}^{n}_{h} \|^{2} \| \nabla T_{t} \|^{2}_{L^{\infty}(t^n,t^{n+1};L^{2}(\Omega))} + \Delta t^{3} \| T_{tt}\|^{2}_{L^{\infty}(t^n,t^{n+1};L^{2}(\Omega))} \notag
\\ &+ \Delta t \| p^{n+1} - q^{n+1}_{h} \|^{2} + \Delta t\| \eta_{t} \|^{2}_{L^{\infty}(t^n,t^{n+1};L^{2}(\Omega))} + \Delta t \| \nabla \eta^{n+1} \|^{2} + \Delta t \| \nabla \zeta^{n+1} \|^{2} \notag
\\ &+ \Delta t \| \nabla u^{n+1} \|^{2} \| \nabla \eta^{n+1} \|^{2} + \Delta t^{3} \| \nabla u^{n+1}_{h} \|^{2} \| \nabla u_{t} \|^{2}_{L^{\infty}(t^n,t^{n+1};L^{2}(\Omega))} + \Delta t \| \nabla \eta^{n} \|^{2} \| \nabla u^{n+1}_{h} \|^{2} \notag
\\ &+ \Delta t \| \nabla {u'}^{n}_{h} \|^{2} \| \nabla \eta^{n+1} \|^{2} + \Delta t \| \nabla {u'}^{n}_{h} \|^{2} \| \nabla \eta^{n} \|^{2} + \Delta t^{3} \| \nabla {u'}^{n}_{h} \|^{2} \| \nabla u_{t} \|^{2}_{L^{\infty}(t^n,t^{n+1};L^{2}(\Omega))} \notag
\\ &+ \Delta t^{3} \| u_{tt} \|^{2}_{L^{\infty}(t^n,t^{n+1};L^{2}(\Omega))} \Big\}. \notag
\end{align}
Multiply by 2, sum from $n = 0$ to $n = N-1$, apply Lemma \ref{l4}, and renorm.  Then,
\begin{align*}
&\|\psi^{N}_{h}\|^{2} + \|\phi^{N}_{h}\|^{2} + \frac{1}{2}\sum_{n = 0}^{N-1}\big(\|\psi^{n+1}_{h} - \psi^{n}_{h}\|^{2} + \|\phi^{n+1}_{h} - \phi^{n}_{h}\|^{2}\big) + \frac{\kappa \Delta t}{2}\|\nabla \psi^{N}_{h}\|^{2} + \frac{Pr \Delta t}{2} \|\nabla \phi^{N}_{h}\|^{2}
\\ &\leq  C\Big\{(2 + \| \nabla u^{n+1} \|^{2} + 2\|\nabla {u'}^{n}_{h}\|^{2}) \Delta t \vertiii{\nabla \zeta}^{2}_{\infty,0} + (1 + \| \nabla T^{n+1}_{h} \|^{2} + \| \nabla u^{n+1} \|^{2} + \|\nabla u^{n+1}_{h}\|^{2} + 2\|\nabla {u'}^{n}_{h}\|^{2}) \Delta t \vertiii{\nabla \eta}^{2}_{\infty,0} 
\\ &+ \Delta t \vertiii{\zeta_{t}}^{2}_{\infty,0} + \big(\| \nabla T^{n+1}_{h} \|^{2} + \| \nabla u^{n+1}_{h} \|^{2} + \| \nabla {u'}^{n}_{h} \|^{2} \big) \Delta t^{3} \vertiii{\nabla u_{t}}^{2}_{\infty,0} + \Delta t^{3} \| \nabla {u'}^{n}_{h} \|^{2} \vertiii{\nabla T_{t}}^{2}_{\infty,0} + \Delta t^{3} \vertiii{ T_{tt} }^{2}_{2,0}
\\ &+ \Delta t \vertiii{ p - q_{h} }^{2}_{\infty,0} + \Delta t \vertiii{ \eta_{t} }^{2}_{\infty,0} + \Delta t^{3} \vertiii{ u_{tt} }^{2}_{\infty,0}\Big\} + \|\psi^{0}_{h}\|^{2} + \frac{\kappa \Delta t}{2}\|\nabla \psi^{0}_{h}\|^{2} + \|\phi^{0}_{h}\|^{2} + \frac{Pr \Delta t}{2}\|\nabla \phi^{0}_{h}\|^{2}
\\ &\leq  C\Big\{\Delta t \vertiii{\nabla \zeta}^{2}_{\infty,0} + \Delta t \vertiii{\nabla \eta}^{2}_{\infty,0} + \Delta t \vertiii{\zeta_{t}}^{2}_{\infty,0} + \Delta t^3 \vertiii{\nabla u_{t}}^{2}_{\infty,0} + \Delta t^3 \vertiii{\nabla T_{t}}^{2}_{\infty,0}
\\ &+ \Delta t \vertiii{ p - q_{h} }^{2}_{\infty,0} + \Delta t \vertiii{ \eta_{t} }^{2}_{\infty,0} + \Delta t^{3} \vertiii{ u_{tt} }^{2}_{\infty,0}\Big\} + \|\psi^{0}_{h}\|^{2} + \frac{\kappa \Delta t}{2}\|\nabla \psi^{0}_{h}\|^{2} + \|\phi^{0}_{h}\|^{2} + \frac{Pr \Delta t}{2}\|\nabla \phi^{0}_{h}\|^{2}.
\end{align*}
Take infimums over $X_{h}$, $Q_{h}$, and $W_{h}$.  Apply the triangle inequality, then
\begin{multline*}
\|e^{N}_{T}\|^{2} + \|e^{N}_{u}\|^{2} + \frac{1}{2}\sum_{n = 0}^{N-1}\big(\|e^{n+1}_{T} - e^{n}_{T}\|^{2} + \|e^{n+1}_{u} - e^{n}_{u}\|^{2}\big) + \frac{\kappa \Delta t}{2}\|\nabla e^{N}_{T}\|^{2} + \frac{Pr \Delta t}{2} \|\nabla e^{N}_{u}\|^{2}
\\ \leq  C \Big\{ \Delta t \inf_{v_{h} \in X_{h}} \Big( \vertiii{\nabla (u - v_{h}) }^{2}_{\infty,0} + \vertiii{(u - v_{h})_{t}}^{2}_{\infty,0} \Big) + \Delta t \inf_{S_{h} \in W_{h}} \Big( \vertiii{\nabla (T - S_{h})}^{2}_{\infty,0} + \vertiii{(T - S_{h})_{t}}^{2}_{\infty,0} \Big)
\\ + \Delta t \inf_{q_{h} \in Q_{h}} \vertiii{ p - q_{h} }^{2}_{\infty,0} + \Delta t^{3}  + \Delta t \|\nabla \eta^{0}\|^{2} + \Delta t \|\nabla \zeta^{0}\|^{2} + \|\eta^{0}\|^{2} 
\\ + \|\zeta^{0}\|^{2} + \| e^{0}_{T} \|^{2} + \| e^{0}_{u} \|^{2} + \Delta t \| \nabla e^{0}_{T} \|^{2} + \Delta t\| \nabla e^{0}_{u} \|^{2}\Big\}.
\end{multline*}
\end{proof}
The same result holds, with a different constant, for the thin wall problem.
\begin{theorem} \label{error:thin}
For (u,p,T) satisfying (9) - (13), suppose that $(u^{0}_{h},p^{0}_{h},T^{0}_{h}) \in (X_{h},Q_{h},W_{h})$ are approximations of $(u^{0},p^{0},T^{0})$ to within the accuracy of the interpolant.  Further, suppose that condition (\ref{c1}) holds. Then there exists a constant C such that
\begin{multline*}
\|e^{N}_{T}\|^{2} + \|e^{N}_{u}\|^{2} + \frac{1}{2}\sum_{n = 0}^{N-1}\big(\|e^{n+1}_{T} - e^{n}_{T}\|^{2} + \|e^{n+1}_{u} - e^{n}_{u}\|^{2}\big) + \frac{\kappa \Delta t}{2}\|\nabla e^{N}_{T}\|^{2} + \frac{Pr \Delta t}{2} \|\nabla e^{N}_{u}\|^{2}
\\ \leq  C \Big\{ \Delta t \inf_{v_{h} \in X_{h}} \Big( \vertiii{\nabla (u - v_{h}) }^{2}_{\infty,0} + \vertiii{(u - v_{h})_{t}}^{2}_{\infty,0} \Big) + \Delta t \inf_{S_{h} \in W_{h}} \Big( \vertiii{\nabla (T - S_{h})}^{2}_{\infty,0} + \vertiii{(T - S_{h})_{t}}^{2}_{\infty,0} \Big)
\\ + \Delta t \inf_{q_{h} \in Q_{h}} \vertiii{ p - q_{h} }^{2}_{\infty,0} + \Delta t^{3}  + \Delta t \|\nabla \eta^{0}\|^{2} + \Delta t \|\nabla \zeta^{0}\|^{2} + \|\eta^{0}\|^{2} 
\\ + \|\zeta^{0}\|^{2} + \| e^{0}_{T} \|^{2} + \| e^{0}_{u} \|^{2} + \Delta t \| \nabla e^{0}_{T} \|^{2} + \Delta t\| \nabla e^{0}_{u} \|^{2}\Big\}.
\end{multline*}
\end{theorem}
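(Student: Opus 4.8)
The plan is to mirror the argument of Theorem \ref{error:thick} essentially line for line, since the thin wall scheme (\ref{scheme:two:velocity})--(\ref{scheme:two:temperature}) differs from the thick wall scheme in only two places: the buoyancy coupling in the momentum equation is lagged, $PrRa(\gamma T^{n}_{h},v_{h})$ rather than $PrRa(\gamma T^{n+1}_{h},v_{h})$, and the temperature equation carries the extra source $(u^{n}_{1h},S_{h})$. First I would write the true solutions as satisfying the weak thin wall form with the consistency errors $\tau_{u},\tau_{T}$ of Lemma \ref{consistency} appended, subtract the scheme, and split the errors as $e^{n}_{T}=\zeta^{n}-\psi^{n}_{h}$ and $e^{n}_{u}=\eta^{n}-\phi^{n}_{h}$, where $\zeta,\eta$ are interpolation errors and $\psi_{h}\in W_{h}$, $\phi_{h}\in V_{h}$. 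Testing the temperature error equation with $\psi^{n+1}_{h}$ and the velocity error equation with $\phi^{n+1}_{h}$, and performing on the forms $b,b^{\ast}$ exactly the add-and-subtract manipulations of Theorem \ref{error:thick}, reproduces every trilinear term already estimated there; those bounds (via Lemma \ref{l1}, the inverse inequality, and Cauchy--Schwarz--Young) carry over verbatim.

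Next I would isolate the two genuinely new contributions. The lagged buoyancy produces the velocity-error term $PrRa(\gamma(T^{n+1}-T^{n}_{h}),\phi^{n+1}_{h})$, which I decompose through $T^{n+1}-T^{n}_{h}=(T^{n+1}-T^{n})+\zeta^{n}-\psi^{n}_{h}$: the time increment gives a $\Delta t^{3}$ consistency term by Taylor's theorem, $\zeta^{n}$ is an interpolation piece $C\|\nabla\zeta^{n}\|^{2}$, and the remaining $-PrRa(\gamma\psi^{n}_{h},\phi^{n+1}_{h})$ I estimate by Cauchy--Schwarz--Young and Poincar\'{e}--Friedrichs, keeping $\psi^{n}_{h}$ in $L^{2}$ to obtain $C\|\psi^{n}_{h}\|^{2}+\frac{\epsilon}{r}\|\nabla\phi^{n+1}_{h}\|^{2}$. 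The new heat source yields the extra temperature-error term $(u^{n+1}_{1}-u^{n}_{1h},\psi^{n+1}_{h})$, split as $(u^{n+1}_{1}-u^{n}_{1},\psi^{n+1}_{h})+(\eta^{n}_{1},\psi^{n+1}_{h})-(\phi^{n}_{1h},\psi^{n+1}_{h})$; Taylor's theorem bounds the first by a $\Delta t^{3}$ consistency term, the second is an interpolation error, and the third, after Cauchy--Schwarz--Young and Poincar\'{e}--Friedrichs, gives $C\|\phi^{n}_{h}\|^{2}+\frac{\epsilon}{r}\|\nabla\psi^{n+1}_{h}\|^{2}$.

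The crucial observation is that the new lower-order pieces $C\|\psi^{n}_{h}\|^{2}$ and $C\|\phi^{n}_{h}\|^{2}$ (each carrying a factor $\Delta t$ once the equation is scaled) are exactly of the form $\Delta t\,d_{n}a_{n}$ in the discrete Gronwall Lemma \ref{l4}, with $a_{n}=\|\psi^{n}_{h}\|^{2}+\|\phi^{n}_{h}\|^{2}$, so they require no dissipation to absorb. I would therefore choose the free parameters $\epsilon_{i},\delta_{i}$ small enough that all the $\|\nabla\psi^{n+1}_{h}\|^{2}$ and $\|\nabla\phi^{n+1}_{h}\|^{2}$ contributions, including the two new level-$(n+1)$ pieces above, are absorbed into the surviving $\kappa\Delta t\|\nabla\psi^{n+1}_{h}\|^{2}$ and $Pr\Delta t\|\nabla\phi^{n+1}_{h}\|^{2}$, use condition (\ref{c1}) precisely as in Theorem \ref{error:thick} to control the inverse-inequality terms $\frac{\Delta t^{2}}{h}\|\nabla{u'}^{n}_{h}\|^{2}\|\nabla\psi^{n+1}_{h}\|^{2}$ and its velocity analogue, and then sum from $n=0$ to $N-1$.

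Finally I would apply Lemma \ref{l4}, which folds the accumulated $L^{2}$ terms into an exponential factor absorbed into $C$ (so the thin wall bound differs from Theorem \ref{error:thick} only in the constant), renorm, take infima over $X_{h},Q_{h},W_{h}$, and use the triangle inequality together with Lemma \ref{l5} to return from $\psi_{h},\phi_{h}$ to $e_{T},e_{u}$ and from the interpolation errors to the best-approximation quantities. The main obstacle is purely bookkeeping: confirming that the lagged buoyancy and the new velocity source are dispatched entirely into $\Delta t^{3}$ consistency terms, interpolation errors swallowed by the infima, or $\Delta t\,a_{n}$ terms fitting the Gronwall structure, and that enough dissipation remains on the left after the parameters are fixed. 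No analytical difficulty beyond Theorem \ref{error:thick} arises, which is why the same bound holds with a different constant.
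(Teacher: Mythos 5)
Your proposal matches the paper's proof essentially step for step: the paper likewise reduces everything to the Theorem \ref{error:thick} machinery, adds and subtracts $PrRa(\gamma T^{n},v_{h})$ and $(u^{n}_{1},S_{h})$ to isolate the lagged buoyancy and the new heat source, bounds the time increments by $\Delta t^{3}$ via Taylor's theorem, splits $e^{n}_{T}$ and $e^{n}_{u_{1}}$ into interpolation pieces plus $\|\psi^{n}_{h}\|^{2}$ and $\|\phi^{n}_{h}\|^{2}$ terms absorbed by the discrete Gronwall lemma, and concludes with the triangle inequality and infima. The approach and all key estimates are the same; no further comment is needed.
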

\begin{proof}
We follow the same methodology as in Theorem \ref{error:thick}.  The error equations for velocity and temperature are
\begin{multline}
(\frac{e^{n+1}_{u} - e^{n}_{u}}{\Delta t},v_{h}) - b(u^{n}_{h}-{u'}^{n}_{h},u^{n+1}_{h},v_{h}) - b({u'}^{n}_{h},u^{n}_{h},v_{h}) + Pr (\nabla e^{n+1}_{u},\nabla v_{h}) - (e^{n+1}_{p}, \nabla \cdot v_{h}) \label{error:two:veleqn}
\\ = Pr Ra \Big\{ (\gamma T^{n+1},v_{h}) - (\gamma T^{n}_{h},v_{h}) \Big\} + \tau_{u}(u^{n+1},v_{h}) \; \; \forall v_{h} \in X_{h},
\end{multline}
\begin{multline}
(\frac{e^{n+1}_{T} - e^{n}_{T}}{\Delta t},S_{h}) + b^{\ast}(u^{n+1},T^{n+1},S_{h}) - b^{\ast}(u^{n}_{h}-{u'}^{n}_{h},T^{n}_{h},S_{h}) - b^{\ast}({u'}^{n}_{h},T^{n}_{h},S_{h}) + \kappa (\nabla e^{n+1}_{T},\nabla S_{h})  \label{error:two:tempeqn}
\\ = (u^{n+1}_{1},S_{h}) - (u^{n}_{1h},S_{h}) + \tau_{T}(T^{n+1},S_{h}) \; \; \forall S_{h} \in W_{h}.
\end{multline}
Add and subtract $Pr Ra (\gamma T^{n},v_{h})$ in (\ref{error:two:veleqn}) and $(u^{n}_{1},S_{h})$ in (\ref{error:two:tempeqn}).  Then,
\begin{multline}
(\frac{e^{n+1}_{u} - e^{n}_{u}}{\Delta t},v_{h}) - b(u^{n}_{h}-{u'}^{n}_{h},u^{n}_{h},v_{h}) - b({u'}^{n}_{h},u^{n}_{h},v_{h}) + Pr (\nabla e^{n+1}_{u},\nabla v_{h}) - (e^{n+1}_{p}, \nabla \cdot v_{h}) 
\\ = Pr Ra \Big\{ (\gamma (T^{n+1}-T^{n}),v_{h}) - (\gamma e^{n}_{T},v_{h}) \Big\} + \tau_{u}(u^{n+1},v_{h}) \; \; \forall v_{h} \in X_{h}, 
\end{multline}
\begin{multline}
(\frac{e^{n+1}_{T} - e^{n}_{T}}{\Delta t},S_{h}) + b^{\ast}(u^{n+1},T^{n+1},S_{h}) - b^{\ast}(u^{n}_{h}-{u'}^{n}_{h},T^{n}_{h},S_{h}) - b^{\ast}({u'}^{n}_{h},T^{n}_{h},S_{h}) + \kappa (\nabla e^{n+1}_{T},\nabla S_{h}) 
\\ = (u^{n+1}_{1} - u^{n}_{1},S_{h}) - (e^{n}_{u1},S_{h}) + \tau_{T}(T^{n+1},S_{h}) \; \; \forall S_{h} \in W_{h}.
\end{multline}
Estimate the new terms using similar techniques as in Theorem \ref{error:thick}:
\begin{align}
|Pr Ra (\gamma (T^{n+1}-T^{n}),v_{h})| &\leq \frac{Pr^{2}Ra^{2}C_{PF,1}^{2}C_{r}}{\epsilon_{26}} \|T^{n+1} - T^{n} \|^{2} + \frac{\epsilon_{26}}{r} \| \nabla v_{h}\|^{2}
\\ &\leq \frac{Pr^{2}Ra^{2}C_{PF,1}^{2}C_{r} \Delta t^{2}}{\epsilon_{26}} \| T_{t} \|^{2}_{L^{\infty}(t^n,t^{n+1};L^{2}(\Omega))} + \frac{\epsilon_{26}}{r} \| \nabla v_{h}\|^{2},\notag
\\ |Pr Ra (\gamma e^{n}_{T},v_{h})| &= |Pr Ra (\gamma \zeta^{n}, v_{h}) - Pr Ra (\gamma \psi^{n}_{h},v_{h})| 
\\ &\leq \frac{Pr^{2}Ra^{2}C_{PF,1}^{2}C_{r}}{\epsilon_{27}} (\| \zeta^{n} \|^{2} + \| \psi^{n}_{h} \|^{2}) + \frac{2 \epsilon_{27}}{r} \| \nabla v_{h}\|^{2},\notag
\\ |(u^{n+1}_{1}-u^{n}_{1},S_{h})| &\leq \frac{C_{PF,2}^{2} C_{r}}{\epsilon_{28}} \| u^{n+1}_{1}-u^{n}_{1} \|^{2} + \frac{\epsilon}{r} \| \nabla S_{h}\|^{2}
\\ &\leq \frac{C_{PF,2}^{2} C_{r} \Delta t^{2}}{\epsilon_{28}} \| u_{t} \|^{2}_{L^{\infty}(t^n,t^{n+1};L^{2}(\Omega))} + \frac{\epsilon_{28}}{r} \| \nabla S_{h}\|^{2},\notag
\\ |(e^{n}_{u_1},S_{h})| = |(\eta^{n}_{1},S_{h}) - (\phi^{n}_{1h},S_{h})| &\leq \frac{Pr^{2}Ra^{2}C_{PF,1}^{2}C_{r}}{\epsilon_{29}} (\| \eta^{n} \|^{2} + \| \phi^{n}_{h} \|^{2}) + \frac{2 \epsilon_{29}}{r} \| \nabla S_{h}\|^{2}.
\end{align}
Apply estimates similar to those in Theorem \ref{error:thick} as well as the above estimates, multiply by $2\Delta t$, sum from $n = 0$ to $n = N-1$.  Further, apply Lemma \ref{l4}, triangle inequality and arrive at the result.
\end{proof}
\begin{corollary}
	Suppose the assumptions of Theorem \ref{t1} hold.  Further suppose that the finite element spaces ($X_{h}$,$Q_{h}$,$W_{h}$) are given by P2-P1-P2 (Taylor-Hood), then the errors in velocity and temperature satisfy
	\begin{multline*}
		\|e^{N}_{T}\|^{2} + \|e^{N}_{u}\|^{2} + \frac{1}{2}\sum_{n = 0}^{N-1}\big(\|e^{n+1}_{T} - e^{n}_{T}\|^{2} + \|e^{n+1}_{u} - e^{n}_{u}\|^{2}\big) + \frac{\kappa \Delta t}{2}\|\nabla e^{N}_{T}\|^{2} + \frac{Pr \Delta t}{2} \|\nabla e^{N}_{u}\|^{2}
		\\ \leq C (\Delta t h^{4} + \Delta t h^{6} + \Delta t^{3} + \Delta t \|\nabla \eta^{0}\|^{2} + \Delta t \|\nabla \zeta^{0}\|^{2} + \|\eta^{0}\|^{2}
		\\ + \|\zeta^{0}\|^{2} + \| e^{0}_{T} \|^{2} + \| e^{0}_{u} \|^{2} + \Delta t \| \nabla e^{0}_{T} \|^{2} + \Delta t\| \nabla e^{0}_{u} \|^{2}).
	\end{multline*}
\end{corollary}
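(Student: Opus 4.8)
The plan is to invoke Theorem~\ref{error:thick} directly and then specialize its right-hand side to the Taylor--Hood pair. That theorem already reduces the error to a sum of best-approximation infima over $X_{h}$, $W_{h}$, and $Q_{h}$, together with the $\Delta t^{3}$ consistency contribution and the (assumed small) initial-error terms. Since those infima are exactly the quantities controlled by the approximation properties (\ref{a1})--(\ref{a3}), the only remaining work is to insert the correct polynomial degrees and collect powers of $h$; no further energy estimate or Gronwall argument is needed.

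For P2--P1--P2 elements the velocity and temperature spaces consist of continuous piecewise quadratics, so (\ref{a1}) and (\ref{a3}) hold with $k=2$, while the pressure space consists of continuous piecewise linears, so (\ref{a2}) holds with $m=2$. First I would take $v_{h}$, $S_{h}$, $q_{h}$ to be the interpolants realizing the infima. For the gradient interpolation errors, (\ref{a1}) and (\ref{a3}) give $\|\nabla(u-v_{h})\|\le Ch^{2}|u|_{3}$ and $\|\nabla(T-S_{h})\|\le Ch^{2}|T|_{3}$, so that $\Delta t\,\vertiii{\nabla(u-v_{h})}^{2}_{\infty,0}$ and $\Delta t\,\vertiii{\nabla(T-S_{h})}^{2}_{\infty,0}$ are each bounded by $C\Delta t\,h^{4}$. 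Applying the same estimates to the time derivatives (legitimate because for a fixed mesh the nodal interpolant commutes with $\partial_{t}$, so a single choice $v_{h}=I_{h}u$ serves the joint infimum) yields $\|(u-v_{h})_{t}\|\le Ch^{3}|u_{t}|_{3}$ and likewise for $T$, whence $\Delta t\,\vertiii{(u-v_{h})_{t}}^{2}_{\infty,0}$ and $\Delta t\,\vertiii{(T-S_{h})_{t}}^{2}_{\infty,0}$ are bounded by $C\Delta t\,h^{6}$. Finally (\ref{a2}) gives $\|p-q_{h}\|\le Ch^{2}|p|_{2}$, so the pressure infimum contributes $C\Delta t\,h^{4}$.

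The regularity assumptions (\ref{error:regularity}) are precisely what guarantee that the seminorms above are finite and bounded uniformly in $t$: $u,T\in L^{\infty}(0,t^{\ast};H^{3})$ control $|u|_{3}$ and $|T|_{3}$; $u_{t},T_{t}\in L^{\infty}(0,t^{\ast};H^{3})$ control $|u_{t}|_{3}$ and $|T_{t}|_{3}$; and $p\in L^{\infty}(0,t^{\ast};H^{2})$ controls $|p|_{2}$. Absorbing all these time-independent bounds into the generic constant $C$, and carrying the $\Delta t^{3}$ consistency term and the initial-data terms through unchanged from Theorem~\ref{error:thick}, collapses the right-hand side to $C(\Delta t\,h^{4}+\Delta t\,h^{6}+\Delta t^{3}+\dots)$, which is the claimed estimate. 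There is no genuine analytical obstacle here, since Theorem~\ref{error:thick} does the heavy lifting; the only point requiring care is the bookkeeping of exponents---keeping straight that the $H^{1}$-type terms scale like $h^{4}$ while the $L^{2}$-norm time-derivative terms scale like $h^{6}$---and confirming that $k=2$, $m=2$ is the correct reading of ``P2--P1--P2'' against the approximation hypotheses.
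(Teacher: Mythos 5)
Your proposal is correct and follows exactly the route the paper intends: the corollary is an immediate specialization of Theorem \ref{error:thick}, obtained by inserting the approximation properties (\ref{a1})--(\ref{a3}) with $k=2$ and (\ref{a2}) with $m=2$ for the Taylor--Hood pair, so that the squared $H^{1}$-infima contribute $\Delta t\,h^{4}$, the squared $L^{2}$ time-derivative infima contribute $\Delta t\,h^{6}$, and the pressure infimum contributes $\Delta t\,h^{4}$, with the regularity assumptions (\ref{error:regularity}) guaranteeing the relevant seminorms are bounded. Your exponent bookkeeping matches the stated bound, so there is nothing to add.
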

\begin{corollary}
	Suppose the assumptions of Theorem \ref{t1} hold.  Further suppose that the finite element spaces ($X_{h}$,$Q_{h}$,$W_{h}$) are given by P1b-P1-P1b (MINI element), then the errors in velocity and temperature satisfy
	\begin{multline*}
		\|e^{N}_{T}\|^{2} + \|e^{N}_{u}\|^{2} + \frac{1}{2}\sum_{n = 0}^{N-1}\big(\|e^{n+1}_{T} - e^{n}_{T}\|^{2} + \|e^{n+1}_{u} - e^{n}_{u}\|^{2}\big) + \frac{\kappa \Delta t}{2}\|\nabla e^{N}_{T}\|^{2} + \frac{Pr \Delta t}{2} \|\nabla e^{N}_{u}\|^{2}
		\\ \leq C (\Delta t h^{2} + \Delta t h^{4} + \Delta t^{3} + \Delta t \|\nabla \eta^{0}\|^{2} + \Delta t \|\nabla \zeta^{0}\|^{2} + \|\eta^{0}\|^{2}
		\\ + \|\zeta^{0}\|^{2} + \| e^{0}_{T} \|^{2} + \| e^{0}_{u} \|^{2} + \Delta t \| \nabla e^{0}_{T} \|^{2} + \Delta t\| \nabla e^{0}_{u} \|^{2}).
	\end{multline*}
\end{corollary}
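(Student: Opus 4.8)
The plan is to obtain this bound as an immediate consequence of the general error estimate in Theorem \ref{error:thick} (equally Theorem \ref{error:thin} for the thin-wall problem) by inserting the approximation orders of the MINI element. First I would invoke Theorem \ref{error:thick}, which already isolates the data- and approximation-dependent quantities on the right-hand side: the initial-data terms $\|\nabla\eta^{0}\|^{2}$, $\|\nabla\zeta^{0}\|^{2}$, $\|\eta^{0}\|^{2}$, $\|\zeta^{0}\|^{2}$, $\|e^{0}_{T}\|^{2}$, $\|e^{0}_{u}\|^{2}$, $\Delta t\|\nabla e^{0}_{T}\|^{2}$, $\Delta t\|\nabla e^{0}_{u}\|^{2}$ and the consistency term $\Delta t^{3}$ are carried over unchanged, so only the three infimum terms must be converted into powers of $h$.

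For the MINI element P1b-P1-P1b, the velocity and temperature spaces $X_{h}$ and $W_{h}$ are continuous piecewise linears enriched by cubic bubbles; the bubble enrichment is present solely to secure the inf-sup condition (\ref{infsup}) and does not alter the polynomial approximation order, so (\ref{a1}) and (\ref{a3}) hold with $k=1$. The pressure space $Q_{h}$ is continuous piecewise linear, for which the $L^{2}$-approximation estimate (\ref{a2}) holds with $m=2$. Under the regularity assumptions (\ref{error:regularity}) we have $u,T\in H^{k+1}$ and $u_{t},T_{t}\in H^{k+1}$, so the standard nodal interpolant $I_{h}$ may be used; since $I_{h}$ commutes with $\partial_{t}$, both the gradient and the time-derivative of the interpolation error are controlled by (\ref{a1}) and (\ref{a3}).

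I would then estimate each infimum term by choosing $v_{h}=I_{h}u$, $S_{h}=I_{h}T$, and $q_{h}=I_{h}p$. With $k=1$, (\ref{a1}) gives $\inf_{v_{h}\in X_{h}}\vertiii{\nabla(u-v_{h})}^{2}_{\infty,0}\leq C h^{2}$ and $\inf_{v_{h}\in X_{h}}\vertiii{(u-v_{h})_{t}}^{2}_{\infty,0}\leq C h^{4}$, and likewise (\ref{a3}) yields these same two orders for the temperature. With $m=2$, (\ref{a2}) gives $\inf_{q_{h}\in Q_{h}}\vertiii{p-q_{h}}^{2}_{\infty,0}\leq C h^{4}$. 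Substituting these into the bound of Theorem \ref{error:thick} produces the leading terms $C\Delta t h^{2}$ (from the two gradient terms) and $C\Delta t h^{4}$ (from the two time-derivative terms and the pressure term), together with the unchanged $\Delta t^{3}$ and initial-data contributions, which is exactly the asserted estimate.

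There is no serious obstacle here: the argument is a direct substitution of interpolation orders, and it differs from the Taylor-Hood case (Corollary 1) only in using $k=1$ rather than $k=2$. The only points requiring minor care are confirming that the cubic bubbles in P1b give the same $k=1$ approximation rate as plain P1 (using Lemma \ref{l5}, which permits approximation of $V$ by $V_{h}$ without loss of order) and that the time-derivative interpolation error retains the full rate $h^{k+1}=h^{2}$; both follow directly from the regularity hypotheses (\ref{error:regularity}).
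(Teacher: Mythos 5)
Your proposal is correct and is precisely the argument the paper intends (the paper in fact states the corollary without proof): substitute the MINI-element approximation orders $k=1$ in (\ref{a1}), (\ref{a3}) and $m=2$ in (\ref{a2}) into the right-hand side of Theorem \ref{error:thick}, giving $\Delta t\,h^{2}$ from the squared gradient terms and $\Delta t\,h^{4}$ from the squared time-derivative and pressure terms, with the $\Delta t^{3}$ and initial-data terms unchanged. Your attention to using a single interpolant that commutes with $\partial_t$ so that both the gradient and time-derivative infima are realized simultaneously is a correct and worthwhile detail.
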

\section{Numerical Experiments}
In this section, we illustrate the stability and convergence of the numerical scheme described by (\ref{scheme:two:velocity}) - (\ref{scheme:two:temperature}) using Taylor-Hood (P2-P1-P2) elements to approximate the average velocity, pressure, and temperature.  The numerical experiments include the double pane window benchmark problem of de Vahl Davis \cite{Davis}, a convergence experiment and predictability exploration with an analytical solution adopted from \cite{Zhang} devised through the method of manufactured solutions.  The software used for all tests is \textsc{FreeFem}$++$ \cite{Hecht}.
\subsection{Stability condition}
The constant appearing in condition (\ref{c1}) is estimated by pre-computations for the double pane window problem appearing below.  We set $C_{\dagger} = 1$.  The first condition is used and checked at each iteration.  If violated, the timestep is halved and the iteration is repeated.  The timestep is never increased.  The condition is violated three times during the computation of the double pane window problem with $Ra = 10^{6}$ in Section 5.3.
\subsection{Perturbation generation}
The bred vector (BV) algorithm of Toth and Kalnay \cite{Toth} is used to generate perturbations in the double pane window problem and in exploring predictability.  The BV algorithm produces a perturbation with maximal separation rate.  We set $J = 2$ and $d = 2$ in all experiments.  An initial random positive/negative perturbation pair was generated $\pm \epsilon = \pm (\epsilon_{1},\epsilon_{2},\epsilon_{3})$ with $\epsilon_{i} \in (0,0.01) \; \forall i = 1,2,3$; that is, a pair of initial perturbations for each component of velocity and temperature.  Utilizing the scheme (\ref{scheme:two:velocity}) - (\ref{scheme:two:temperature}), denote the control and perturbed numerical approximations $\chi^{n}_{h}$ and $\chi^{n}_{p,h}$, respectively.  Then, a bred vector $bv(\chi;\epsilon_{i})$ is generated via:

\textbf{Step one:}  Given $\chi^{0}_{h}$ and $\epsilon_{i}$, put $\chi^{0}_{p,h} = \chi^{0}_{h} + \epsilon_{i}$.  Select time reinitialization interval $\delta t \geq \Delta t$ and let \indent $t^{k} = k \delta t$ with $0 \leq k \leq k^{\ast} \leq N$.

\indent \textbf{Step two:} Compute $\chi^{k}_{h}$ and $\chi^{k}_{p,h}$.  Calculate $bv(\chi^{k};\epsilon_{i}) = \frac{\epsilon_{i}}{\| \chi^{k}_{p,h} - \chi^{k}_{h} \|} (\chi^{k}_{p,h} - \chi^{k}_{h})$.

\indent \textbf{Step three:}  Put $\chi^{k}_{p,h} = \chi^{k}_{h} + bv(\chi^{k};\epsilon_{i}) $.

\indent \textbf{Step four:} Repeat \textbf{Step two} with $k = k + 1$.

\indent \textbf{Step five:} Put $bv(\chi;\epsilon_{i}) = bv(\chi^{k^{\ast}};\epsilon_{i})$.

\noindent A positive/negative perturbed initial condition pair is generated via $\chi_{\pm} = \chi^{0} + bv(\chi;\pm \epsilon_{i})$.  We let $\delta t = \Delta t = 0.001$ and $k^{\ast} = 5$.

\begin{figure}
	\centering
	\includegraphics[width=5.5in,height=\textheight, keepaspectratio]{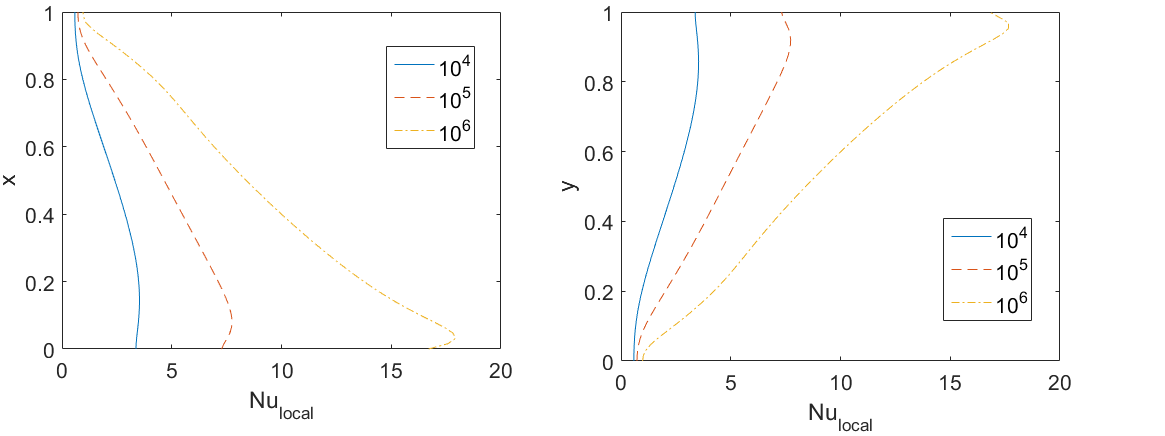}
	\caption{Variation of the local Nusselt number at the hot (left) and cold walls (right).}
\end{figure}

\subsection{The double pane window problem}
The first numerical experiment is the benchmark problem of de Vahl Davis \cite{Davis}.  The problem is the two-dimensional flow of a fluid in an unit square cavity with $Pr = 0.71$ and $\kappa = 1.0$. Both velocity components (i.e. $u = 0$) are zero on the boundaries. The horizontal walls are insulated and the left and right vertical walls are maintained at temperatures $T(0,y,t) = 1$ and $T(1,y,t) = 0$, respectively; see Figure 1b.  We let $10^3 \leq Ra \leq 10^6$.  The initial conditions for velocity and temperature are generated via the BV algorithm in Section 5.2,
\begin{align*}
u_{\pm}(x,y,0) := u(x,y,0;\omega_{1,2}) &= (1 + bv(u;\pm \epsilon_{1}), 1 + bv(u;\pm \epsilon_{2}))^{T}, \\
T_{\pm}(x,y,0) := T(x,y,0;\omega_{1,2}) &= 1 + bv(T;\pm \epsilon_{3}).
\end{align*}
Both $f(x,t;\omega_{j})$ and $g(x,t;\omega_{j})$ are identically zero for $j = 1,2$.  The finite element mesh is a division of $[0,1]^{2}$ into $64^{2}$ squares with diagonals connected with a line within each square in the same direction.  The stopping condition is
\begin{equation*}
\max_{0\leq n \leq N-1}\big\{\frac{\| u^{n+1}_{h} - u^{n}_{h}\|}{\| u^{n+1}_{h}\|},\frac{\|T^{n+1}_{h} - T^{n}_{h}\|}{\| T^{n+1}_{h}\|}{}\big\} \leq 10^{-5}
\end{equation*}
and initial timestep $\Delta t = 0.001$.  The timestep was halved three times to $0.000125$ to maintain stability for $Ra = 10^{6}$.  Several quantities are compared with benchmark solutions in the literature.  These include the maximum vertical velocity at $y = 0.5$, $\max_{x \in \Omega_{h}} {u_{2}(x,0.5,t^{\ast})}$, and maximum horizontal velocity at $x = 0.5$,  $\max_{y \in \Omega_{h}} {u_{1}(0.5,y,t^{\ast})}$.  We present our computed values for the mean flow in Tables 1 and 2 alongside several of those seen in the literature.  Furthermore, the local Nusselt number is calculated at the cold (+) and hot walls (-), respectively, via
\begin{equation*}
Nu_{local} =  \pm \frac{\partial{T}}{\partial{x}}.
\end{equation*}
The average Nusselt number on the vertical boundary at x = 0 is calculated via
\begin{equation*}
	Nu_{avg} =  \int^{1}_{0} Nu_{local} dy.
\end{equation*}
Figure 2 presents the plots of $Nu_{local}$ at the hot and cold walls.  Table 3 presents computed values of $Nu_{avg}$ alongside several of those seen in the literature.  Figures 3 and 4 present the velocity streamlines and temperature isotherms for the averages.  All results are seen to be in good agreement with the benchmark values in the literature \cite{Davis, Manzari, Wan, Cibik, Zhang}.

\vspace{5mm}
\begin{adjustbox}{max width=\textwidth}
	\begin{tabular}{ c  c  c  c  c  c  c  c }
		\hline			
		Ra & Present study & Ref. \cite{Davis} & Ref. \cite{Manzari} & Ref. \cite{Wan} & Ref. \cite{Cibik} & Ref. \cite{Zhang} \\
		\hline
		$10^{4}$ & 16.18 (64$\times$64) & 16.18 (41$\times$41) & 16.10 (71$\times$71) & 16.10 (101$\times$101) & 15.90 (11$\times$11) & 16.18 (64$\times$64)\\
		$10^{5}$ & 34.72 (64$\times$64) & 34.81 (81$\times$81) & 34 (71$\times$71) & 34 (101$\times$101) & 33.51 (21$\times$21) & 34.74 (64$\times$64) \\
		$10^{6}$ & 64.80 (64$\times$64) & 65.33 (81$\times$81) & 65.40 (71$\times$71) & 65.40 (101$\times$101) & 65.52 (32$\times$32) & 64.81 (64$\times$64)\\
		\hline  
	\end{tabular}
\end{adjustbox}
\captionof{table}{Comparison of maximum horizontal velocity at x = 0.5 together with mesh size used in computation for the double pane window problem.}
\begin{adjustbox}{max width=\textwidth}
	\begin{tabular}{ c  c  c  c  c  c  c  c }
		\hline			
		Ra & Present study & Ref. \cite{Davis} & Ref. \cite{Manzari} & Ref. \cite{Wan} & Ref. \cite{Cibik} & Ref. \cite{Zhang} \\
		\hline
		$10^{4}$ & 19.60 (64$\times$64) & 19.51 (41$\times$41) & 19.90 (71$\times$71) & 19.79 (101$\times$101) & 19.91 (11$\times$11) & 19.62 (64$\times$64)\\
		$10^{5}$ & 68.53 (64$\times$64) & 68.22 (81$\times$81) & 70 (71$\times$71) & 70.63 (101$\times$101) & 70.60 (21$\times$21) & 68.48 (64$\times$64) \\
		$10^{6}$ & 215.96 (64$\times$64) & 216.75 (81$\times$81) & 228 (71$\times$71) & 227.11 (101$\times$101) & 228.12 (32$\times$32) & 220.44 (64$\times$64)\\
		\hline  
	\end{tabular}
\end{adjustbox}
\captionof{table}{Comparison of maximum horizontal velocity at y = 0.5 together with mesh size used in computation for the double pane window problem.}
\centering
\begin{adjustbox}{max width=\textwidth}
	\begin{tabular}{ c  c  c  c  c  c  c  c }
		\hline			
		Ra & Present study & Ref. \cite{Davis} & Ref. \cite{Manzari} & Ref. \cite{Wan} & Ref. \cite{Cibik} & Ref. \cite{Zhang} \\
		\hline
		$10^{4}$ & 2.24 (64$\times$64) & 2.24 (41$\times$41) & 2.08 (71$\times$71) & 2.25 (101$\times$101) & 2.15 (11$\times$11) & 2.25 (64$\times$64)\\
		$10^{5}$ & 4.52 (64$\times$64) & 4.52 (81$\times$81) & 4.30 (71$\times$71) & 4.59 (101$\times$101) & 4.35 (21$\times$21) & 4.53 (64$\times$64) \\
		$10^{6}$ & 8.87 (64$\times$64) & 8.92 (81$\times$81) & 8.74 (71$\times$71) & 8.97 (101$\times$101) & 8.83 (32$\times$32) & 8.87 (64$\times$64)\\
		\hline  
	\end{tabular}
\end{adjustbox}
\captionof{table}{Comparison of average Nusselt number on the vertical boundary at x = 0 together with mesh size used in computation for the double pane window problem.}
\subsection{Numerical convergence study}
	In this section, we illustrate the convergence rates for the proposed algorithm (\ref{scheme:two:velocity}) - (\ref{scheme:two:temperature}).  The unperturbed solution is given by
\begin{align*}
u(x,y,t) &= (10x^2(x-1)^2y(y-1)(2y-1)\cos(t), -10x(x-1)(2x-1)y^2(y-1)^2\cos(t))^T, \\
T(x,y,t) &= u_{1}(x,y,t) + u_{2}(x,y,t), \\
p(x,y,t) &= 10(2x-1)(2y-1)\cos(t),
\end{align*}
with $\kappa = Pr = 1.0$, $Ra = 100$, and $\Omega = [0,1]^{2}$.  The perturbed solutions are given by
\begin{align*}
u(x,y,t;\omega_{1,2}) = (1 + \epsilon_{1,2})u(x,y,t), \\
T(x,y,t;\omega_{1,2}) = (1 + \epsilon_{1,2})T(x,y,t), \\
p(x,y,t;\omega_{1,2}) = (1 + \epsilon_{1,2})p(x,y,t), 
\end{align*} 
where $\epsilon_{1} = 1e-2 = -\epsilon_{2}$ and both forcing and boundary terms are adjusted appropriately.  The perturbed solutions satisfy the following relations,
\begin{align*}
< u > = 0.5\big(u(x,y,t;\omega_{1}) + u(x,y,t;\omega_{2}) \big) = u(x,y,t), \\
< T > = 0.5\big(T(x,y,t;\omega_{1}) + T(x,y,t;\omega_{2}) \big) = T(x,y,t), \\
< p > = 0.5\big(p(x,y,t;\omega_{1}) + p(x,y,t;\omega_{2}) \big) = p(x,y,t).
\end{align*}
The finite element mesh is a Delaunay triangulation generated from $m$ points on each side of $\Omega$.  We calculate errors in the approximations of the average velocity, temperature and pressure with the $L^{\infty}(0,t^{\ast};L^{2}(\Omega))$ and $L^{\infty}(0,t^{\ast};H^{1}(\Omega))$ norms.  Rates are calculated from the errors at two successive $m_{1,2}$ or $\Delta t_{1,2}$ via
\begin{align*}
\frac{\log_{2}(e_{\chi}(m_{1})/e_{\chi}(m_{2}))}{\log_{2}(m_{1}/m_{2})},\\
\frac{\log_{2}(e_{\chi}(\Delta t_{1})/e_{\chi}(\Delta t_{2}))}{\log_{2}(\Delta t_{1}/\Delta t_{2})},
\end{align*}
respectively, with $\chi = u, T, p$.  We first illustrate spatial convergence.  We isolate the spatial error by first choosing a fixed timestep $\Delta t = 0.0001$ and setting the final time $t^{\ast} = 0.001$.  The parameter $m$ is varied between 4, 8, 16, 32, 64, and 128.  Results are presented in Table 4.  Third order convergence is observed in velocity and temperature  and second order convergence in pressure in the $L^{\infty}(0,t^{\ast};L^{2}(\Omega))$ norm and second order convergence in velocity and temperature in the $L^{\infty}(0,t^{\ast};H^{1}(\Omega))$ norm.

Temporal convergence is illustrated by choosing a fixed $m = 64$ and setting the final time $t^{\ast} = 1$.  The timestep is varied between 4, 8, 16, 32, 64, 128.  Table 5 confirms first order convergence in velocity, temperature, and pressure in the $L^{\infty}(0,t^{\ast};L^{2}(\Omega))$ norm and in velocity and temperature in the $L^{\infty}(0,t^{\ast};H^{1}(\Omega))$ norm.

\vspace{5mm}
\begin{adjustbox}{max width=\textwidth}
\begin{tabular}{ c  c  c  c  c  c  c  c  c  c  c  c }
	\hline			
	$1/m$ & $\vertiii{ <u_{h}>- u }_{\infty,0}$ & Rate & $\vertiii{ \nabla <u_{h}> - \nabla u }_{\infty,0}$ & Rate & $\vertiii{ <T_{h}> - T }_{\infty,0}$ & Rate & $\vertiii{ \nabla <T_{h}> - \nabla T }_{\infty,0}$ & Rate & $\vertiii{ <p_{h}> - p }_{\infty,0}$ & Rate \\
	\hline
	4 & 0.00134087 & - & 0.0376324 & - & 2.49E-04 & - & 0.0100481 & - & 0.427751 & -\\
	8 & 3.68E-04 & 1.87 & 0.0162059 & 1.22 & 3.03E-05 & 3.04 & 0.00171527 & 2.55 & 0.0256596 & 4.06 \\
	16 & 5.56E-05 & 2.73 & 0.00443669 & 1.87 & 4.95E-06 & 2.61 & 4.82E-04 & 1.83 & 0.00482023 & 2.41 \\
	32 & 6.35E-06 & 3.13 & 9.80E-04 & 2.18 & 5.71E-07 & 3.12 & 1.07E-04 & 2.18 & 1.10E-03 & 2.13 \\
	64 & 8.67E-07 & 2.87 & 2.70E-04 & 1.86 & 8.13E-08 & 2.81 & 3.01E-05 & 1.82 & 2.70E-04 & 2.02 \\
	128 & 1.06E-07 & 3.04 & 6.63E-05 & 2.03 & 9.56E-09 & 3.09 & 7.08E-06 & 2.09 & 6.58E-05 & 2.04 \\
	\hline  
\end{tabular}
\end{adjustbox}
\captionof{table}{Errors and rates for average velocity, temperature, and pressure in corresponding norms.} 
\begin{adjustbox}{max width=\textwidth}
\begin{tabular}{ c  c  c  c  c  c  c  c  c  c  c  c }
		\hline			
		$1/\Delta t$ & $\vertiii{ <u_{h}>- u }_{\infty,0}$ & Rate & $\vertiii{ \nabla <u_{h}> - \nabla u }_{\infty,0}$ & Rate & $\vertiii{ <T_{h}> - T }_{\infty,0}$ & Rate & $\vertiii{ \nabla <T_{h}> - \nabla T }_{\infty,0}$ & Rate & $\vertiii{ <p_{h}> - p }_{\infty,0}$ & Rate \\
		\hline
		4 & 0.00698068 & - & 0.0524076 & - & 1.12E-04 & - & 0.000798053 & - & 0.122182 & - \\
		8 & 0.0036989 & 0.92 & 0.0277725 & 0.92 & 6.78E-05 & 0.73 & 4.81E-04 & 0.73 & 0.0647005 & 0.92 \\
		16 & 0.001898 & 0.96 & 0.0142518 & 0.96 & 3.66E-05 & 0.89 & 2.60E-04 & 0.89 & 0.0331928 & 0.96 \\
		32 & 9.61E-04 & 0.98 & 0.00721454 & 0.98 & 1.89E-05 & 0.95 & 1.35E-04 & 0.94 & 0.0168049 & 0.98 \\
		64 & 4.83E-04 & 0.99 & 0.00363088 & 0.99 & 9.62E-06 & 0.98 & 7.02E-05 & 0.95 & 0.00846082 & 0.99 \\
		128 & 2.42E-04 & 1.00 & 0.00182511 & 0.99 & 4.85E-06 & 0.99 & 3.81E-05 & 0.89 & 0.0042531 & 0.99 \\
		\hline  
\end{tabular}
\end{adjustbox}
\captionof{table}{Errors and rates for average velocity, temperature, and pressure in corresponding norms.}
\subsection{Exploration of predictability}
Consider the problem with manufactured solution in Section 5.4.  However, instead of specifying the perturbations on the initial conditions, the BV algorithm in Section 5.2 yields
\begin{align*}
	u_{\pm}(x,y,0) := u(x,y,0;\omega_{1,2}) &= (u_{1}(x,y,0) + bv(u;\pm \epsilon_{1}), u_{2}(x,y,0) + bv(u;\pm \epsilon_{2}))^{T}, \\
	T_{\pm}(x,y,0) := T(x,y,0;\omega_{1,2}) &=  T(x,y,0) + bv(T;\pm \epsilon_{3}).
\end{align*}
The forcing functions and boundary conditions are left unperturbed.  Further, the Rayleigh number is varied between $10^2$ and $10^4$.  The initial timestep is 0.001 and final time $t^{\ast} = 0.5$.  Herein, we will define energy, variance, average effective Lyapunov exponent \cite{Boffetta}, and $\delta$-predictability horizon \cite{Boffetta}.
\begin{definition} The energy is given by
\begin{align*}
	Energy := \| T \| + \frac{1}{2}\| u \|^{2}.
\end{align*}
The variance of $\chi$ is
\begin{align*}
	V(\chi) := < \| \chi \|^{2} > - \| <\chi> \|^{2} = < \| \chi^{'} \|^{2} >.
\end{align*} 
The relative energy fluctuation is
\begin{align*}
	r(t) := \frac{\| \chi_{+} - \chi_{-} \|^{2}}{\| \chi_{+} \| \| \chi_{-} \|},
\end{align*}
and the \textit{average effective Lyapunov exponent} over $0 < \tau \leq t^{\ast}$ is
\begin{align*}
	\gamma_{\tau}(t) := \frac{1}{2 \tau}\log\big(\frac{r(t + \tau)}{r(t)}\big),
\end{align*}
with $0 < t + \tau \leq t^{\ast}$.  The $\delta$-\textit{predictability horizon} is
\begin{align*}
	t_{p} := \frac{1}{\gamma_{t^{\ast}}(0)}\log\Big(\frac{\delta}{\| (\chi_{+} - \chi_{-}) (0) \|}\Big).
\end{align*}
\end{definition}
Figure 3 presents the energy and variance of the approximate solutions with $Ra = 10^4$.  The variance of the perturbed solutions indicates that they do not deviate much from the mean and therefore not much from each other.  This seems to explain, in part, why the energy associated with these solutions is similar.  Interestingly, the energy associated with the unperturbed and mean computed solutions sit atop of one another; that is, the mean leads to a superior estimate than either member of the ensemble.  It seems that the BV algorithm generated a positive/negative initial condition pair leading to two solutions whose average approximates the unperturbed solution well.

Figures 4 and 5 present $\gamma_{t^{\ast}}(t)$ and $t_{p}$ for mean temperature and velocity approximations for $10^2 \leq Ra \leq 10^4$ and $\| (\chi_{+} - \chi_{-}) (0) \| \leq \delta \leq 0.15$.  The approximated effective Lyapunov exponent $\gamma_{t^{\ast}}(0)$ and $t_{p}$ are negative for both velocity and temperature for all Rayleigh numbers indicating a predictable flow.  However, $\gamma_{t^{\ast}}(t)$ changes sign for temperature and velocity at approximately $t = 0.11$ for $Ra = 10^4$ indicating a loss of predictability.
\begin{figure}
	\centering
	\includegraphics[width=5.5in,height=\textheight,keepaspectratio]{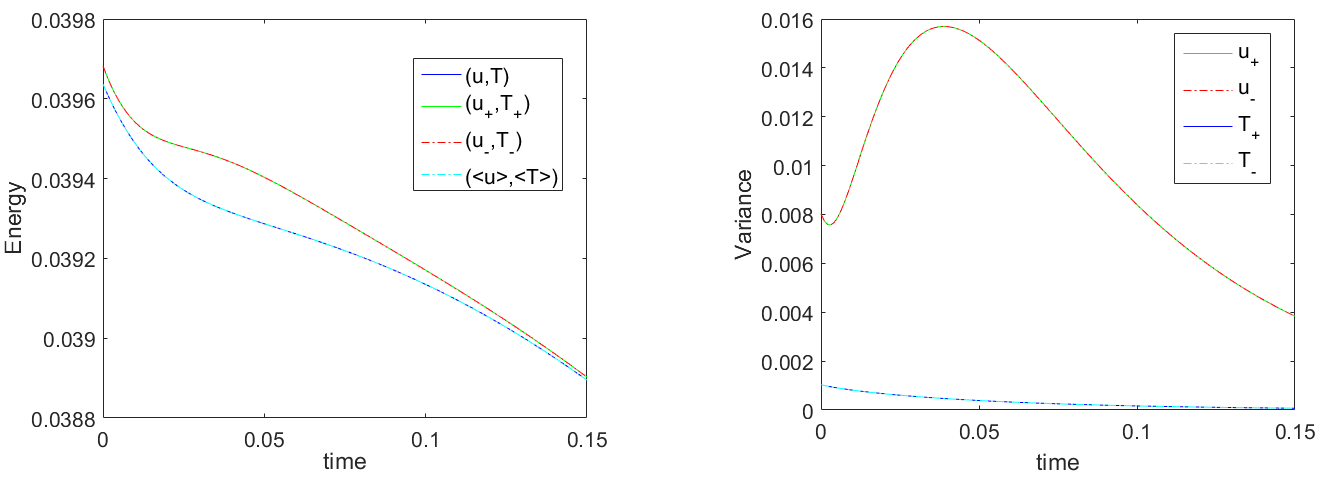}
	\caption{Comparison of the energy in the system (left) and variance of each velocity and temperature ensemble member (right).}
\end{figure}
\begin{figure}
	\centering
	\includegraphics[width=5.5in,height=\textheight,keepaspectratio]{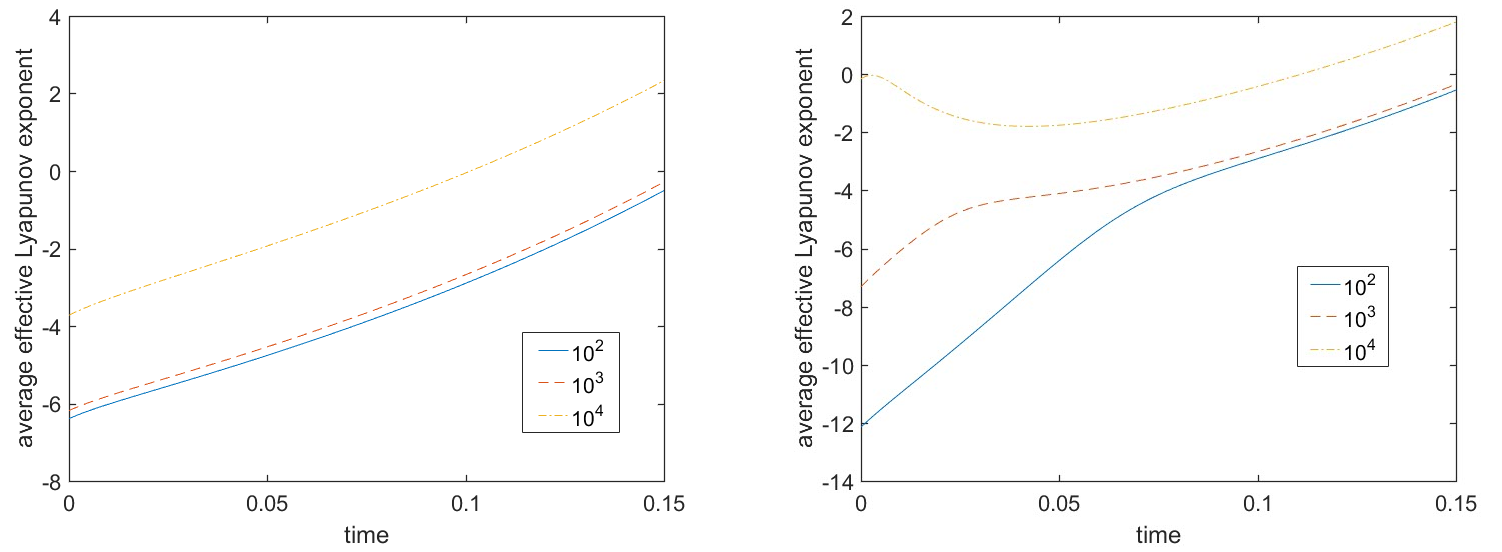}
	\caption{Comparison of average effective Lyapunov exponent for temperature (left) and velocity (right).}
\end{figure}
\begin{figure}
	\centering
	\includegraphics[width=5.5in,height=\textheight,keepaspectratio]{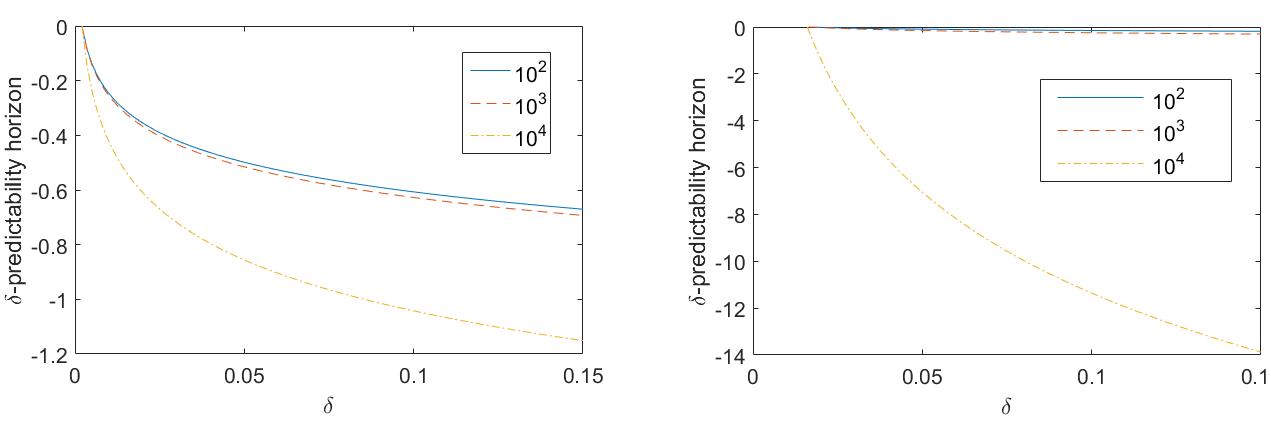}
	\caption{Comparison of $\delta$-predictability horizons for temperature (left) and velocity (right).}
\end{figure}

\section{Conclusion}
We presented two algorithms for calculating an ensemble of solutions to two laminar natural convection problems.  These algorithms addressed the competition between ensemble size and resolution in simulations.  In particular, both algorithms required the solution of a single matrix equation, at each time step, with multiple right hand sides.  Stability and convergence were proven and numerical experiments were performed to illustrate these properties.

\begin{figure}
	\includegraphics[width=\textwidth,height=\textheight,keepaspectratio]{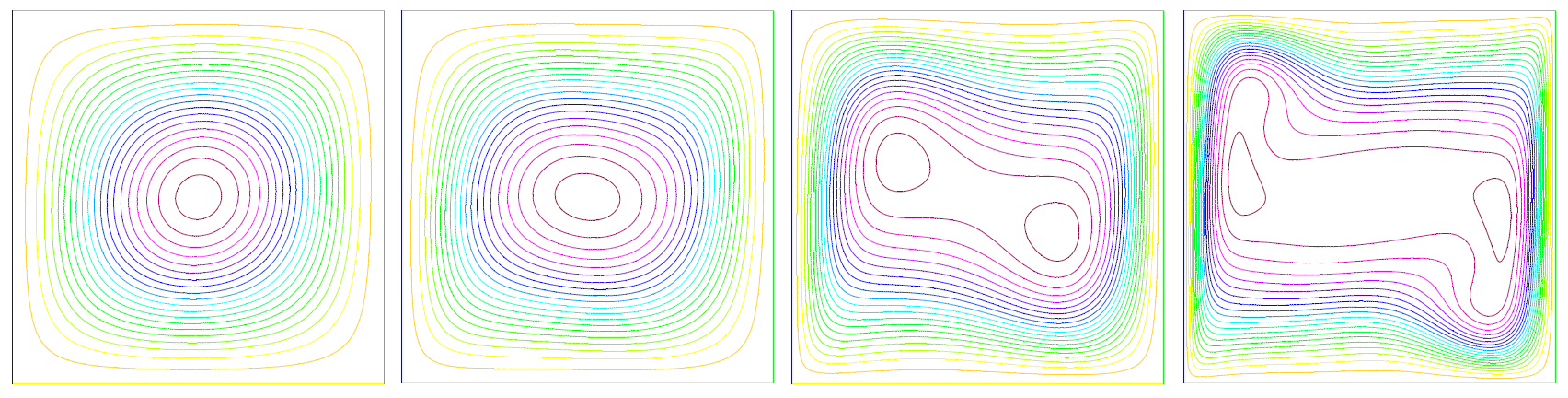}
	\caption{Streamlines for $Ra = 10^3, 10^4, 10^5,$ and $10^6$, from left to right, respectively.}
\end{figure}
\begin{figure}
	\includegraphics[width=\textwidth,height=\textheight,keepaspectratio]{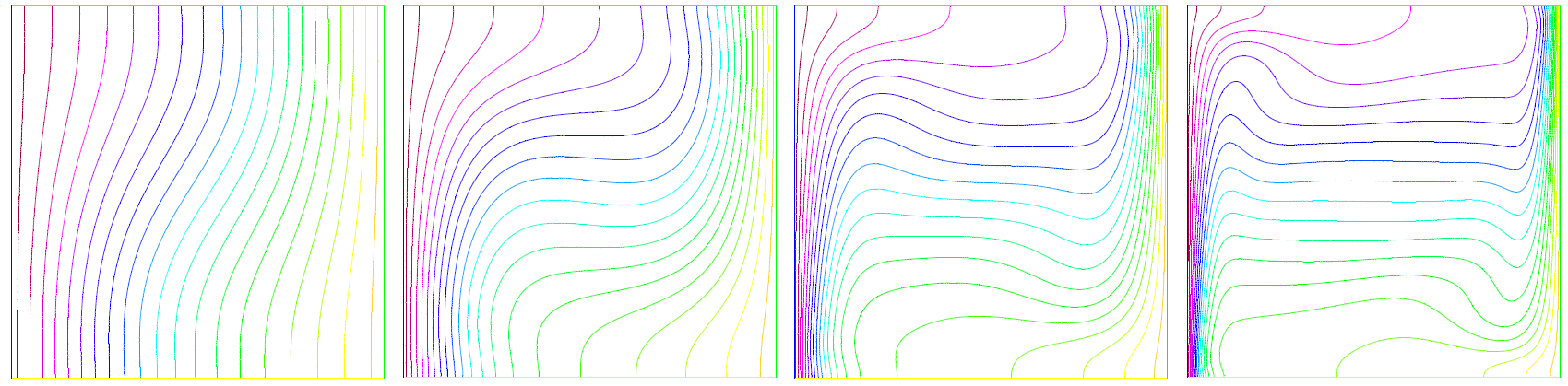}
	\caption{Isotherms for $Ra = 10^3, 10^4, 10^5,$ and $10^6$, from left to right, respectively.}
\end{figure}
\section*{Acknowledgments}
The author J.A.F. is supported by the DoD SMART Scholarship.  Moreover, the research herein was partially supported by NSF grants CBET 1609120 and DMS 1522267.  Further, the authors would like to thank Dr. Nan Jiang for her input and discussion.

\end{document}